\newtheorem{theorem}{Theorem}[section]
\newtheorem{lemma}[theorem]{Lemma}
\newtheorem{proposition}[theorem]{Proposition}
\newtheorem{corollary}[theorem]{Corollary}
\newtheorem{remark}[theorem]{Remark}
\newtheorem{notation}[theorem]{Notation}
\newtheorem{definition}[theorem]{Definition}
\renewcommand{\a}{\alpha}
\newcommand{\N}{\mathbb N}
\newcommand{\Z}{\mathbb Z}
\newcommand{\vs}[1]{\langle #1 \rangle}
\newcommand{\ap}{{\rm Ap}}
\newcommand{\sbu}{{\small $\bullet$ }}
\newcommand{\sbbu}{{\small $\bullet\bullet$ }}
\newcommand{\ind}{\mathbf{1}}
\title{On numerical semigroups with at most 12 left elements}
\author{S. Eliahou and D. Mar\'in-Arag\'on}
\date{}
\begin{document}

\maketitle

\begin{abstract} For a numerical semigroup $S \subseteq \N$ with embedding dimension $e$, conductor $c$ and left part $L=S \cap [0,c-1]$, set $W(S) = e|L|-c$. In 1978 Wilf asked, in equivalent terms, whether $W(S) \ge 0$ always holds, a question known since as Wilf's conjecture. Using a closely related lower bound $W_0(S) \le W(S)$, we show that if $|L| \le 12$ then $W_0(S) \ge 0$, thereby settling Wilf's conjecture in this case. This is best possible, since cases are known  where $|L|=13$ and $W_0(S)=-1$. Wilf's conjecture remains open for $|L| \ge 13$. 
\end{abstract}

\section{Introduction}

Let $\N=\{0,1,2,\ldots \}$ denote as usual the set of nonnegative integers. Given integers $a \le b$, we denote by $[a,b[=[a,b-1]=\{z \in \Z \mid a \le z < b\}$, and by $[a,\infty[=\{z \in \Z \mid a \le z\}$, the integer intervals they span. A \emph{numerical semigroup} is a submonoid $S$ of $(\N,+)$ such that $|\N\setminus S|$ is finite. Equivalently, it is a subset $S$ of $\N$ of the form $S = \vs{a_1,\dots,a_n}=\N a_1+\dots+\N a_n$ where $\gcd(a_1,\dots,a_n)=1$. The least such $n$ is called the \emph{embedding dimension} of $S$ and is often denoted $e$. The \emph{multiplicity} of $S$ is $m = \min S^*$, where $S^*=S \setminus \{0\}$. The \emph{conductor} of $S$ is $c=\max (\Z \setminus S)+1$, or equivalently, the least $c \in \N$ such $[c, \infty[ \, \subseteq S$. The \emph{genus} of $S$ is $g = |\N \setminus S|$. The \emph{left part} of $S$ is 
$$L=\{s\in S \mid s<c\} = S \cap [0,c[.$$ 
The \emph{left elements of $S$} are the elements of $L$. Finally, as in \cite{WilfMacaulay}, we denote
\begin{equation}\label{W}
W(S) = e|L|-c.
\end{equation} 
In 1978 Wilf asked, in equivalent terms, whether the inequality
$$W(S) \ge 0$$
holds for every numerical semigroup $S$~\cite{Wilf}. This open question is now known as Wilf's conjecture. Various particular cases have been settled, including the six independent cases $e \le 3$, $|L| \le 6$, $m \le 18$, $g \le 60$, $c \le 3m$ and $e \ge m/3$. See e.g. \cite{BGOW, D, cota4, WilfMacaulay, E2, FGH, FH, K, Mosca, Mio, Sammartano, Sy}. See also~\cite{D2} for a recent extensive survey on this topic.

The authors of \cite{cota4} settled Wilf's conjecture in case $|L|\leq 4$. This was later extended in~\cite{WilfMacaulay}, where a certain lower bound $W_0(S) \le W(S)$ was introduced and shown to satisfy $W_0(S) \ge 0$ whenever $|L|\leq 6$. Here we further extend this result by showing that $W_0(S)\geq 0$ holds whenever $|L|\leq 12$. This is best possible since, as shown in~\cite{Jean}, there are numerical semigroups $S$ such that $|L| = 13$ and $W_0(S) < 0$. See also Section~\ref{sub W0}.

This paper is organized as follows. In Section~\ref{section notation} we recall some notation and background, including the definition of $W_0(S)$. In Section~\ref{section profile} we establish $W_0(S) \ge 0$ in some special circumstances. Our main result, namely $W_0(S) \ge 0$ if $|L| \le 12$, and hence Wilf's conjecture in that case, is proved in Section~\ref{main}.

For extensive information on numerical semigroups, see \cite{libro}.

\section{Background and notation}\label{section notation}
In this section, we recall some notation and terminology introduced in \cite{WilfMacaulay}. Let $S$ be a numerical semigroup. We denote by $P \subset S^*$ the unique minimal generating set of $S$, so that $S = \vs{P}$ and $|P|=e$, the embedding dimension. It coincides with the set of \emph{primitive elements} of $S$, i.e. those $x \in S^*$ which are not the sum of two smaller elements of $S^*$. Let $m,c$ be the multiplicity and conductor of $S$, respectively. The \emph{depth} of $S$ is $q = \lceil c/m \rceil$ and its \emph{offset} is
$\rho = qm-c$. Thus
\begin{equation}\label{rho}
c=qm-\rho, \ \ \rho \in [0,m[.
\end{equation}
The set of \emph{decomposable elements} of $S$ is $$D=S^* \setminus P=S^*+S^*.$$
Note that $D$ contains $[c+m,\infty[$. Indeed, if $z \ge c+m$, then $z=m+(z-m)$, so that  $z \in S^*+S^*$ since both $m \in S^*$ and $z-m \in S^*$ as $z-m \ge c$. It follows that
\begin{equation}\label{P confined}
P \subseteq [m,c+m[.
\end{equation}

Throughout Section~\ref{section notation}, the symbols $m,c,q$ and $\rho$ will denote, often tacitly so, the multiplicity, conductor, depth and offset of the numerical semigroup $S$ under consideration, respectively.

\subsection{The level function $\lambda$}

Let $S \subseteq \N$ be a numerical semigroup. We shall further use the following notation, as in~\cite{WilfMacaulay}. 

\begin{notation}\label{nota} For all $j \in \Z$, we denote
\begin{eqnarray*}
I_j & = & [jm-\rho,(j+1)m-\rho[, \\
S_j & = & S\cap I_j,\,\, P_j=P\cap I_j,\,\, D_j=D\cap I_j.
\end{eqnarray*}
\end{notation}
In particular, we have
$$I_q=[qm-\rho, (q+1)m-\rho[ \ = [c,c+m[.$$

The following set addition rules are shown in \cite{WilfMacaulay}. The proof is straightforward and left to the reader.
\begin{proposition}\label{addition} Let $S$ be a numerical semigroup. For all $i,j \ge 1$, we have \vspace{-0.2cm}
\begin{eqnarray*}
S_1 + S_j & \subseteq & S_{1+j} \cup S_{2+j}, \\
S_i + S_j & \subseteq & S_{i+j-1} \cup S_{i+j} \cup S_{i+j+1} \ \textrm{ if } i,j \ge 2. \hspace{1cm}\Box
\end{eqnarray*} 
\end{proposition}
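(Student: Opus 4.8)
The plan is to reduce both containments to elementary interval arithmetic, using only two facts: first, that $S$ is closed under addition, so $a+b \in S$ whenever $a,b \in S$; and second, that the intervals $I_k = [km-\rho,(k+1)m-\rho[$ are half-open, of common length $m$, and tile $\Z$. Consequently, asserting that $a+b$ lies in a union $S_k \cup \dots \cup S_{k'}$ of consecutive pieces amounts to checking the single chain of inequalities $km - \rho \le a + b < (k'+1)m - \rho$ and then invoking $a + b \in S$. Thus for each rule I only need to bound $a+b$ from below and from above, and read off which $I$-intervals the sum can possibly meet.

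For the second rule, take $a \in S_i$ and $b \in S_j$ with $i,j \ge 2$. From $a \in I_i$ and $b \in I_j$ I have $im - \rho \le a < (i+1)m - \rho$ and $jm - \rho \le b < (j+1)m - \rho$. Adding these gives $(i+j)m - 2\rho \le a + b < (i+j+2)m - 2\rho$. Using $0 \le \rho < m$, the lower estimate satisfies $(i+j)m - 2\rho > (i+j-1)m - \rho$ and the upper estimate satisfies $(i+j+2)m - 2\rho \le (i+j+2)m - \rho$, so $a+b$ lies in $[(i+j-1)m - \rho, (i+j+2)m - \rho[ \ = I_{i+j-1} \cup I_{i+j} \cup I_{i+j+1}$. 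Together with $a+b \in S$, this is exactly the claim.

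For the first rule the same naive computation would a priori spread the sum over three intervals; the point is to sharpen the lower bound so that only two survive. Here I use that $m$ is the multiplicity: since $a \in S_1 \subseteq S^*$ and $S$ has no nonzero element below $m$, in fact $a \ge m$, not merely $a \ge m - \rho$. Combined with $b \ge jm - \rho$ this yields $a + b \ge (j+1)m - \rho$, which is precisely the left endpoint of $I_{j+1}$ and thereby rules out $I_j$. The upper bound $a + b < (2m - \rho) + ((j+1)m - \rho) = (j+3)m - 2\rho \le (j+3)m - \rho$ again uses only $\rho \ge 0$ and places $a + b$ strictly below the right endpoint of $I_{j+2}$. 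Hence $a+b \in I_{j+1} \cup I_{j+2}$, and the claim follows as before.

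The only genuine subtlety, and the one step I would flag, is this last sharpening: recognizing that membership in $S_1$ forces $a \ge m$ rather than the weaker interval bound $a \ge m - \rho$, which is what collapses the range from three intervals to two. Everything else is routine bookkeeping with the constraints $0 \le \rho < m$, which is presumably why the authors are content to leave the verification to the reader.
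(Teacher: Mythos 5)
Your proof is correct, and since the paper explicitly leaves this verification to the reader, your interval-arithmetic argument — including the key observation that $a\in S_1$ forces $a\ge m$ (because $0\notin I_1$ as $\rho<m$, so $a\in S^*$), which is what trims the first containment to two intervals — is precisely the intended "straightforward" proof. Nothing to add.
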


\begin{notation} The \emph{level function} $\lambda_S \colon \N \to \N$ associated to $S$ is defined by
$$
\lambda_S(x) = j \iff x \in I_j \iff jm-\rho \le x \le (j+1)m - \rho -1
$$
for all $x \in \N$. In particular, if $x \in S$ then $\lambda_S(x) = j$ if and only if $x \in S_j$.
\end{notation}
In the sequel, for simplicity, we shall write $\lambda$ for $\lambda_S$. Using this function, the above proposition translates as follows.

\begin{corollary}\label{cor level} Let $x,y \in S^*$. If $\lambda(x), \lambda(y) \ge 2$ then
\begin{equation}\label{level}
\lambda(x)+\lambda(y)-1 \le \lambda(x+y) \le \lambda(x)+\lambda(y)+1.
\end{equation}
If $\lambda(x) =1$ or $\lambda(y) =1$, then 
$\lambda(x)+\lambda(y) \le \lambda(x+y) \le \lambda(x)+\lambda(y)+1$. \hfill $\Box$
\end{corollary}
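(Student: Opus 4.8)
The plan is to read off the corollary directly from Proposition~\ref{addition} by translating set membership into values of the level function. Write $i=\lambda(x)$ and $j=\lambda(y)$; by the definition of $\lambda$ this is the same as saying $x\in S_i$ and $y\in S_j$. Since $x,y\in S^*$ both satisfy $x,y\ge m$, and since $m\in I_1$ (because $m-\rho\le m<2m-\rho$ as $0\le\rho<m$), we have $i,j\ge 1$, so the hypotheses of Proposition~\ref{addition} are indeed met in each of the two cases.

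Next I would split along the dichotomy in the statement. If $i,j\ge 2$, the second addition rule gives $x+y\in S_i+S_j\subseteq S_{i+j-1}\cup S_{i+j}\cup S_{i+j+1}$. Because the intervals $I_k$ partition $\Z$, the element $x+y$ lies in exactly one $S_k$, so its membership in this union forces $\lambda(x+y)\in\{i+j-1,\,i+j,\,i+j+1\}$, which is precisely \eqref{level}. If instead $\min(i,j)=1$, then by commutativity of addition I may assume $\lambda(x)=1$, i.e. $x\in S_1$; the first addition rule then yields $x+y\in S_1+S_j\subseteq S_{1+j}\cup S_{2+j}$, whence $\lambda(x+y)\in\{1+j,\,2+j\}=\{\lambda(x)+\lambda(y),\,\lambda(x)+\lambda(y)+1\}$, which is the claimed two-sided bound.

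There is essentially no hard step here: the entire content is supplied by Proposition~\ref{addition}, and the corollary is merely its restatement in the language of $\lambda$. The only points needing a word of justification are that every element of $S^*$ has a well-defined level (because the $I_k$ tile $\Z$ into blocks of length $m$, so $x+y$ belongs to a unique $S_k$), and that $i,j\ge 1$ for $x,y\in S^*$ so that the proposition genuinely applies. I would simply record these two observations and then quote the two inclusions above.
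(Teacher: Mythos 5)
Your proof is correct and matches the paper's treatment: the corollary is stated there with no proof beyond a $\Box$, precisely because it is the immediate translation of Proposition~\ref{addition} into the language of $\lambda$ that you carry out. Your two added observations (that the $I_k$ tile $\Z$ so the level is well defined and unique, and that $\lambda(x)\ge 1$ for $x\in S^*$) are exactly the right points to record.
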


\medskip
Here are some more consequences.
\begin{corollary}  Let $a,x,y \in S^*$. Then $\lambda(x+y) > \max\{\lambda(x),\lambda(y)\}$. If $\lambda(a+x)=\lambda(a+y)$, then $|\lambda(y)-\lambda(x)| \le 1$.
\end{corollary}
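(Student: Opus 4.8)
The plan is to derive both assertions directly from the level-function machinery already in place, treating them separately. Throughout I use that every $x \in S^*$ satisfies $x \ge m$, and hence $\lambda(x) \ge 1$: indeed $\lambda(x) = 0$ would force $x \in I_0 = [-\rho, m-\rho[$, whose elements are all $< m-\rho \le m$ (as $\rho \ge 0$), contradicting $x \ge m$.

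For the first assertion, $\lambda(x+y) > \max\{\lambda(x),\lambda(y)\}$, I would invoke the lower bounds of Corollary~\ref{cor level} and split into the two cases appearing there. If $\lambda(x), \lambda(y) \ge 2$, then $\lambda(x+y) \ge \lambda(x)+\lambda(y)-1 \ge \lambda(x)+1$, and symmetrically $\lambda(x+y) \ge \lambda(y)+1$, so $\lambda(x+y)$ strictly exceeds both. If instead (say) $\lambda(x)=1$, then the second bound of Corollary~\ref{cor level} gives the sharper estimate $\lambda(x+y) \ge \lambda(x)+\lambda(y) = 1+\lambda(y)$; since $\lambda(y) \ge 1 = \lambda(x)$ we have $\max\{\lambda(x),\lambda(y)\}=\lambda(y) < 1+\lambda(y) \le \lambda(x+y)$. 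The case $\lambda(y)=1$ is symmetric, so the strict inequality holds in all cases.

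For the second assertion I would argue from the interval structure of Notation~\ref{nota} rather than from Corollary~\ref{cor level}. The equality $\lambda(a+x)=\lambda(a+y)=j$ places both $a+x$ and $a+y$ in the single interval $I_j=[jm-\rho,(j+1)m-\rho[$, which contains exactly $m$ consecutive integers; hence $|x-y| = |(a+x)-(a+y)| \le m-1$. It then remains to see that two integers at distance at most $m-1$ have levels differing by at most $1$: if we had, say, $\lambda(y) \ge \lambda(x)+2$, then $y \ge (\lambda(x)+2)m-\rho$ while $x \le (\lambda(x)+1)m-\rho-1$, forcing $y-x \ge m+1$, a contradiction. Thus $|\lambda(y)-\lambda(x)| \le 1$.

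There is no deep obstacle here; the statement is a short bookkeeping consequence of the preceding results. The only points demanding care are the boundary case $\lambda=1$ in the first part, where Corollary~\ref{cor level} supplies the sharper bound $\lambda(x)+\lambda(y)$ in place of $\lambda(x)+\lambda(y)-1$ (and this is precisely what makes the strict inequality go through when one level equals $1$), and, in the second part, the observation that each $I_j$ is a half-open interval of length exactly $m$, so that joint membership of $a+x$ and $a+y$ in $I_j$ yields $|x-y| \le m-1 < m$.
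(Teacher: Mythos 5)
Your proposal is correct and follows essentially the same route as the paper: the first assertion is read off from the bounds in Corollary~\ref{cor level} (your case split at $\lambda=1$ just makes explicit what the paper leaves implicit), and the second uses the same observation that $a+x$ and $a+y$ lying in a common interval $I_j$ of length $m$ forces $|x-y|\le m-1$, which is incompatible with a level gap of $2$ or more.
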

\begin{proof} The first statement directly follows from Corollary~\ref{cor level}. As for the second one, let $i=\lambda(x), j=\lambda(y)$. We may assume $i \le j$. Let $k=\lambda(a+x)$. Then $a+x, a+y \in S_k$. Hence $|y-x|=|(a+y)-(a+x)| \le m-1$. It follows that $j \le i+1$, since if $j \ge i+2$ then $\min S_j - \max S_i \ge m+1$.
\end{proof}

\subsection{The number $W_0(S)$}\label{sub W0}
\begin{notation} For a numerical semigroup $S$, we denote
\begin{equation}\label{W0}
W_0(S) = |P\cap L||L|-q|D_q|+\rho.
\end{equation}
\end{notation}
Introduced in~\cite{WilfMacaulay}, this number bounds $W(S)$ from below and is sometimes easier to evaluate. See also~\cite{D}, where $W_0(S)$ is denoted $E(S)$. The following result is Proposition 3.11 in~\cite{WilfMacaulay}. For convenience, we recall the short proof.

\begin{proposition}
Let $S$ be a numerical semigroup. Then $W(S)\geq W_0(S)$.
\end{proposition}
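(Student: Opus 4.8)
The plan is to prove the inequality by computing the difference $W(S)-W_0(S)$ explicitly and exhibiting it as a product of two nonnegative quantities. The first move is to split the generating set along the level decomposition. By \eqref{P confined} we have $P \subseteq [m,c+m[$, and every primitive is at least $m$, so $P$ partitions as $P = (P\cap L) \sqcup P_q$, where $P\cap L = P\cap[m,c[$ and $P_q = P\cap I_q = P\cap[c,c+m[$. Consequently $e = |P| = |P\cap L| + |P_q|$, and therefore
$$W(S) = e|L|-c = |P\cap L|\,|L| + |P_q|\,|L| - c.$$

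Next I would subtract $W_0(S) = |P\cap L|\,|L| - q|D_q| + \rho$ term by term. The two $|P\cap L|\,|L|$ contributions cancel, leaving
$$W(S)-W_0(S) = |P_q|\,|L| - c + q|D_q| - \rho.$$
Substituting $c = qm-\rho$ from \eqref{rho} collapses the constants, since $-c-\rho = -qm$, so the right-hand side becomes $|P_q|\,|L| - q(m-|D_q|)$. The decisive simplification comes from the top level: because $I_q = [c,c+m[\,\subseteq S$ we have $S_q = I_q$, hence $|S_q| = m$; and since every element of $I_q$ is positive, $S_q = P_q \sqcup D_q$, which gives $m-|D_q| = |P_q|$. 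Plugging this in yields the clean identity
$$W(S)-W_0(S) = |P_q|\bigl(|L|-q\bigr).$$

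It then remains only to show that both factors are nonnegative. Trivially $|P_q|\ge 0$. For the second factor I would exhibit $q$ distinct elements of $L$, namely the multiples $0, m, 2m, \dots, (q-1)m$. Each $jm$ lies in $I_j$, since $jm-\rho \le jm < (j+1)m-\rho$ holds whenever $\rho\in[0,m[$, and of course $jm\in S$; moreover $(q-1)m = qm-m < qm-\rho = c$, again because $\rho<m$, so all $q$ of these multiples lie below $c$ and hence in $L$. This forces $|L|\ge q$, and the proposition follows at once.

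I expect the only point requiring care to be the identity $m-|D_q| = |P_q|$, which rests on recognizing that $I_q$ is entirely contained in $S$ and that within it the only elements of $P$ are those counted by $P_q$; the rest is bookkeeping with \eqref{rho} and \eqref{P confined}. The inequality $|L|\ge q$ is the conceptual heart of the argument, but it becomes immediate once one observes that the $q$ multiples of $m$ strictly below $c$ occupy $q$ distinct levels.
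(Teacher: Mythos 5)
Your proposal is correct and follows essentially the same route as the paper's own proof: both derive the identity $W(S)-W_0(S)=|P_q|(|L|-q)$ from the decompositions $|P|=|P\cap L|+|P_q|$ and $m=|S_q|=|P_q|+|D_q|$, and both conclude via $|L|\ge q$ using the multiples $0,m,\dots,(q-1)m$. The only difference is that you spell out the bookkeeping in more detail.
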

\begin{proof} We have $W(S)=|P||L|-c=|P||L|-qm+\rho$. We have $m=|P_q|+|D_q|$, since $m=|S_q|=|P_q \sqcup D_q|$, and $|P|=|P \cap L|+|P_q|$. It follows that
$$
W(S)=W_0(S)+|P_q|(|L|-q).
$$
Now $|L| \ge q$, since $L$ contains the $q$-subset $\{0,1,\dots,q-1\}m$.
\end{proof}

\begin{corollary}\label{Wilf with W0}
Let $S$ be a numerical semigroup such that $W_0(S) \ge 0$. Then $S$ satisfies Wilf's conjecture. 
\end{corollary}
\begin{proof} We have $W(S) \ge W_0(S) \ge 0$.
\end{proof}
This corollary is the basis of our approach in this paper, whose main result is that  $W_0(S) \ge 0$ whenever $|L| \le 12$. Note that in contrast to Wilf's conjecture, the number $W_0(S)$ can be negative, but such cases are extremely rare. For instance, among the more than $10^{13}$ numerical semigroups of genus $g \le 60$, only five of them satisfy $W_0(S) < 0$. See \cite{D, Jean, FH}. More specifically, these five exceptions all satisfy $W_0(S)=-1$, $|L|=13$ and $c=4m$, and they occur at genus $43, 51, 55, 55$ and $59$, respectively. The first one, of genus $g=43$, is $S=\vs{14,22,23} \cup [56, \infty[$.

\medskip

The following result has been established in~\cite{WilfMacaulay}.

\begin{theorem}\label{q le 3} Let $S$ be a numerical semigroup of depth $q \le 3$. Then $W_0(S) \ge 0$. In particular, $S$ satisfies Wilf's conjecture.
\end{theorem}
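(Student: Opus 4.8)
The plan is to control the single negative term in
\[
W_0(S) = |P\cap L|\,|L| - q|D_q| + \rho,
\]
exploiting the slack $\rho \ge 0$. Two structural facts drive everything. First, $D \subseteq [2m,\infty[$, so $D_1 = D\cap I_1 = \emptyset$; hence $S_1 = P_1$, $S_0 = \{0\}$, $P_0 = \emptyset$, and $P\cap L = \bigsqcup_{j=1}^{q-1} P_j$. Second, writing any $z \in D_q$ as a sum of at least two primitives and peeling off one of them gives $z = p + s$ with $p \in P$ and $s \in S^*$; since $\lambda(z) > \max\{\lambda(p),\lambda(s)\}$, both $p$ and $s$ lie in levels $1,\dots,q-1$, whence
\[
D_q \subseteq (P\cap L) + L^*, \qquad L^* = L\setminus\{0\}.
\]
For $q = 1$ we are done at once since $D_1 = \emptyset$. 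For $q = 2$, Corollary~\ref{cor level} forces $\lambda(p)=\lambda(s)=1$ for every $z \in D_2$, so $D_2 \subseteq S_1 + S_1$ and $|D_2| \le \binom{|S_1|+1}{2}$; with $|P\cap L| = |S_1|$ and $|L| = 1 + |S_1|$ this yields $2|D_2| \le |S_1|\,|L| = |P\cap L|\,|L|$, hence $W_0(S) \ge \rho \ge 0$.

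The substance is the case $q = 3$, where the target is $3|D_3| \le |P\cap L|\,|L| + \rho$. Refining the level bookkeeping with Corollary~\ref{cor level} shows that each $z \in D_3$ decomposes as a sum of two elements whose levels are $(1,1)$, $(1,2)$ or $(2,2)$, so that
\[
D_3 \subseteq \big[(S_1+S_1)\cup(S_1+S_2)\cup(S_2+S_2)\big]\cap I_3 .
\]
The confinement to the length-$m$ interval $I_3 = [c,c+m[$ is decisive: from $S_1+S_1 \subseteq [2m-2\rho,4m-2\rho[$ and $S_2+S_2 \subseteq [4m-2\rho,6m-2\rho[$ one checks that their traces on $I_3$ occupy subintervals of lengths at most $m-\rho$ and $\rho$ respectively, whereas the cross term $S_1+S_2$ spans all of $I_3$ and is only controlled by $|S_1|\cdot|S_2|$. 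Feeding these into the identity $|P\cap L| = |S_1| + |P_2|$, together with $|S_2| = |P_2| + |D_2|$ and the bound $|D_2| \le \binom{|S_1|+1}{2}$ from the previous step, is meant to produce the required inequality.

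The main obstacle is exactly the cross term $(S_1+S_2)\cap I_3$ and the overlaps among the three sumsets: the product bound $|S_1+S_2| \le |S_1|\,|S_2|$ is far too lossy, and one cannot afford to triple-count. The clean way to close this gap is to regard the level sizes $|S_1|,|S_2|,|S_3|$ as the Hilbert function of a standard graded object, with $S_1$ in the role of degree-one generators, and to invoke Macaulay's theorem to bound the growth $|S_2|\rightarrow|S_3|$, equivalently to bound $|D_3|$ in terms of $|S_1|$ and $|S_2|$. This is the mechanism yielding $3|D_3| \le |P\cap L|\,|L| + \rho$; once it is established, $W_0(S) \ge 0$ follows in all cases $q \le 3$, and Wilf's conjecture in this range follows from Corollary~\ref{Wilf with W0}.
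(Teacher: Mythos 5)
First, note that the paper does not actually prove Theorem~\ref{q le 3}: it imports it from \cite{WilfMacaulay} with no argument, so there is no in-paper proof to compare against. Judged on its own terms, your proposal is fine for $q\le 2$: the observations $D_1=\emptyset$, $D_2\subseteq S_1+S_1$ (forced by Corollary~\ref{cor level}, since two levels $\ge 2$ would sum past level $3$), $|S_1+S_1|\le\binom{|S_1|+1}{2}$, $|P\cap L|=|S_1|$ and $|L|=1+|S_1|$ do combine to give $2|D_2|\le |P\cap L|\,|L|$ and hence $W_0(S)\ge\rho\ge 0$. The level decomposition of $D_3$ into $(1,1)$, $(1,2)$, $(2,2)$ sums is also correct.

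The genuine gap is that the case $q=3$ --- which is the entire substance of the theorem --- is not proved. You correctly diagnose that the interval-length bounds on $(S_1+S_1)\cap I_3$ and $(S_2+S_2)\cap I_3$ and the product bound on $|S_1+S_2|$ are insufficient, and you then write that invoking Macaulay's theorem ``is meant to produce'' the inequality $3|D_3|\le |P\cap L|\,|L|+\rho$. But nothing is actually derived: you do not construct the standard graded algebra (one must build a multigraded/graded object from $S_1,S_2,S_3$ and verify it is generated in degree one, which requires care precisely because of the level-shifting in Proposition~\ref{addition}), you do not state which Macaulay-type growth bound is being applied or to which Hilbert function values, and you do not carry out the resulting numerical verification, which in \cite{WilfMacaulay} is itself a nontrivial case analysis and is where the offset $\rho$ genuinely enters. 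Naming the right tool --- it is, after all, in the title of the cited reference --- is not the same as closing the argument; as written, the crucial inequality for $q=3$ is asserted, not established, so the proof is incomplete exactly where the theorem has content.
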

Consequently, in proving here that $W_0(S) \ge 0$ if $|L| \le 12$, we only need to consider the case of depth $q \ge 4$. The next three sections focus on the Apéry set of $S$ with respect to $m$ and provide tools to evaluate $W_0(S)$ and prove our main result.

\subsection{The Ap\'ery profile of $S$}\label{sub apery}
Let $S \subseteq \N$ be a numerical semigroup. We denote by $$A =\ap(S,m)=\{x\in S \mid x-m\not\in S\}$$ the \emph{Ap\'ery set} of $S$ with respect to $m$. Equivalently, $A = S \setminus (m+S)$. Each Apéry element $x \in A$ is the smallest element in $S$ of its class mod $m$, since $x-m \notin S$. Hence $|A|=m$. We have
$$A \subseteq [0, c+m[.$$ 
Indeed, this follows from the inclusion $[c+m,\infty[ \: = m + [c,\infty[ \: \subseteq m+S$. We now introduce the Apéry profile of $S$.
\begin{notation}\label{Ai} Let $S$ be a numerical semigroup of depth $q \ge 1$. For all $0\leq i\leq q$, we set $A_i  = A\cap I_i$ and $$\a_i = |A_i|.$$
\end{notation}
We have $A_0=\{0\}$, so $\a_0=1$. Moreover, $P_1=\{m\} \sqcup A_1$ and $P_i \subseteq A_i$ for all $i \ge 2$.
\begin{definition}
We call \emph{Apéry profile} of $S$ the $(q-1)$-tuple 
$$\a(S) = (\a_1,\dots,\a_{q-1}) \in \N^{q-1}.$$
\end{definition}
As noted above, we have
\begin{equation}\label{alpha i}
\a_1=|P_1|-1 \textrm{ and }\, \a_i\geq |P_i| \, \textrm{ for all } 2\leq i< q.
\end{equation}
Moreover, since $A \subseteq [0, c+m[ = I_0 \sqcup I_1 \sqcup \dots \sqcup I_q$, we have
\begin{equation}\label{part A}
A = A_0 \sqcup A_1 \sqcup \dots \sqcup A_q.
\end{equation}
Therefore
\begin{equation}\label{eq sum a_i}
\sum_{i=0}^q \a_i = |A| = m.
\end{equation}
This justifies why $\a_0,\a_q$ are not included in the profile $\a(S)$, as $\a_0=1$ and $\a_q$ may be recovered from $\a(S)$ and $m$ by the above formula.

\subsection{Primitive and decomposable Apéry elements}
Let $S$ be a numerical semigroup of multiplicity $m$ and Apéry set $A=S \setminus (m+S)$. A key point in the sequel is to distinguish, in $A^* = A \setminus \{0\}$, the primitive elements from the decomposable ones. Indeed, the partition
$$A^* = (A \cap P) \sqcup (A \cap D)$$
plays an important role and motivates the following notation.

\begin{notation}\label{prim and dec} Let $S$ be a numerical semigroup of depth $q \ge 1$. For all  $1 \le i \le q$, we set 
$$\a_i' = |A_i \cap P|, \ \ \a_i'' = |A_i \cap D|.$$ 
Thus $\alpha_i = \alpha_i'+\alpha_i''$ for all $i$.
\end{notation}

Since $P \subseteq [m,c+m[$ as seen above, and since $[m,c+m[ \ \subseteq I_1 \cup \dots \cup I_q$, we have
\begin{equation}\label{card of P}
|P| = 1+\a'_1 + \dots + \a'_{q}.
\end{equation}
In particular,
\begin{eqnarray*}
\a'_{q} & = & |P \setminus L| \, =\,  \big|P \cap [c,c+m[\big|, \\
|D_q| & = & |D \cap [c,c+m[| \, =\, \big|[c,c+m[ \setminus P\big |, \\  
m & = & |P_q| +|D_q|.
\end{eqnarray*}

The following properties of the Apéry set $A$ will be widely used below, often tacitly so.

\begin{lemma}\label{downset} Let $z \in A \cap D$. If $z =x+y$ with $x,y \in S^*$, then $x,y \in A^*$.
\end{lemma}
\begin{proof} If $x \notin A$, then $x=m+s$ for some $s \in S$, implying $z=x+y=m+(s+y)$. Since $s+y \in S$, it follows that $z\notin A$, contrary to the hypothesis.
\end{proof}

\begin{proposition} For all $k \ge 2$, we have
$\displaystyle
A_k \cap D \subseteq \cup_{i,j} (A_i + A_j)
$
where $1 \le i \le j$ and $k-1 \le i+j \le k+1$. 
\end{proposition}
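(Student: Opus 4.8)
The plan is to take an arbitrary decomposable Apéry element $z \in A_k \cap D$ and produce a decomposition $z = x+y$ with $x,y \in A^*$ whose levels $i = \lambda(x)$ and $j = \lambda(y)$ satisfy the required constraints. First I would write $z = x+y$ with $x,y \in S^*$, which is possible precisely because $z \in D = S^*+S^*$. The crucial first reduction is to invoke Lemma~\ref{downset}: since $z \in A \cap D$, both summands $x$ and $y$ must lie in $A^*$. This immediately places $x \in A_i$ and $y \in A_j$ for $i = \lambda(x) \ge 1$ and $j = \lambda(y) \ge 1$, and after relabelling I may assume $i \le j$. It remains only to control $i+j$ in terms of $k = \lambda(z)$.

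The level bounds come directly from Corollary~\ref{cor level}. Since $z = x+y$ with $x,y \in S^*$, I apply the level inequalities to get $k = \lambda(x+y)$ bounded above and below by $i+j$ up to an additive error of one. Concretely, in the case $i,j \ge 2$, the corollary gives $i+j-1 \le k \le i+j+1$, which rearranges to $k-1 \le i+j \le k+1$, exactly as claimed. In the remaining case where $i = 1$ or $j = 1$, the corollary's second clause gives the sharper lower bound $i+j \le k \le i+j+1$, so $k-1 \le i+j \le k$, which is certainly contained in the asserted range $k-1 \le i+j \le k+1$. In either case the constraint $1 \le i \le j$ with $k-1 \le i+j \le k+1$ is met, and so $z \in A_i + A_j$ for a valid pair $(i,j)$.

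I expect no serious obstacle here, as the statement is essentially an immediate packaging of the two preceding results: Lemma~\ref{downset} forces both summands into $A$, and Corollary~\ref{cor level} controls how levels add. The only point requiring a moment's care is confirming that the weaker clause of Corollary~\ref{cor level} (when one summand has level $1$) still lands inside the stated interval rather than outside it; since $k-1 \le i+j \le k \subseteq [k-1,k+1]$, this causes no trouble. Taking the union over all admissible pairs $(i,j)$ then yields the displayed inclusion for $A_k \cap D$.

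\begin{proof}
Let $z \in A_k \cap D$ with $k \ge 2$. Since $z \in D = S^*+S^*$, write $z = x+y$ with $x,y \in S^*$. As $z \in A \cap D$, Lemma~\ref{downset} gives $x,y \in A^*$. Set $i = \lambda(x)$, $j = \lambda(y)$, so that $x \in A_i$, $y \in A_j$ with $i,j \ge 1$; after swapping $x$ and $y$ if necessary, we may assume $i \le j$. Applying Corollary~\ref{cor level} to $x,y$, we obtain $\lambda(x)+\lambda(y)-1 \le \lambda(x+y) \le \lambda(x)+\lambda(y)+1$ when $i,j \ge 2$, and $\lambda(x)+\lambda(y) \le \lambda(x+y) \le \lambda(x)+\lambda(y)+1$ when $i = 1$ or $j = 1$. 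Since $k = \lambda(z) = \lambda(x+y)$, both cases yield $k-1 \le i+j \le k+1$. Hence $z \in A_i + A_j$ with $1 \le i \le j$ and $k-1 \le i+j \le k+1$. Taking the union over all such pairs establishes the inclusion.
\end{proof}
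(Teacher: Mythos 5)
Your proof is correct and follows essentially the same route as the paper, which deduces the statement directly from Lemma~\ref{downset} together with Proposition~\ref{addition}; your use of Corollary~\ref{cor level} is just the level-function restatement of that same addition rule. The extra care you take with the case $i=1$ or $j=1$ is sound and only tightens the range to $[k-1,k]$, which sits inside the asserted interval.
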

\begin{proof} Directly follows from Proposition~\ref{addition} and Lemma~\ref{downset}.
\end{proof}

\begin{corollary} If $A_k \not= \emptyset$ and $A_i = \emptyset$ for all $1 \le i \le k-1$ for some $k \ge 2$, then $A_k = P_k$.
\end{corollary}
\begin{proof} Directly follows from the above proposition.
\end{proof}

\subsection{Compressed Apéry elements}
Throughout, let $S$ denote a numerical semigroup with multiplicity $m$, conductor $c$ and Apéry set $A=S \setminus (m+S)$.

\begin{definition} Let $s \in S^*$. We say that $s$ is \emph{compressed} if there exist $x,y \in S^*$ such that $s=x+y$ and $\lambda(s) < \lambda(x)+\lambda(y)$.
\end{definition}
By Corollary~\ref{cor level}, we have $\lambda(x+y) \ge \lambda(x)+\lambda(y)-1$ for all $x,y \in S^*$. Thus, the inequality $\lambda(x+y) < \lambda(x)+\lambda(y)$ is equivalent to $\lambda(x+y) = \lambda(x)+\lambda(y)-1$. Estimating the number of compressed elements in $A$ is important in the sequel. This motivates the following notation.

\begin{notation}\label{C}
$C = C(S)=\{z \in A\cap D \mid z \textrm{ is compressed}\}$.
\end{notation}
Recall from Lemma~\ref{downset} that if $z \in A\cap D$ and $z=x+y$ with $x,y \in S^*$, then in fact $x,y \in A^*$. Consequently, for  all $i,j \ge 2$, we have
\begin{equation}\label{on C}
(S_i+S_j)\cap A_{i+j-1} = (A_i+A_j)\cap A_{i+j-1} \ \subseteq \ C.
\end{equation}
More generally, even if the description below will not be needed here, we have
$$
C = \bigcup_{k=3}^q A_k \cap (\cup_{i=1}^{k-1} (A_i+A_{k+1-i})).
$$
The next result provides a key lower bound on $\rho$, where $\rho$ is the offset as defined in \eqref{rho}. See also Proposition 3.20 in \cite{E2}.
\begin{proposition}\label{deficit} Let $S$ be a numerical semigroup. Then $\rho \ge |C|$. 
\end{proposition}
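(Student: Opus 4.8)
The plan is to produce an injection from $C$ into an explicit set of exactly $\rho$ integers, which immediately yields $|C|\le\rho$. The natural coordinate to use is the position of an Apéry element inside its own level interval: for $z\in A$ with $\lambda(z)=k$, write $z=km-\rho+r(z)$ so that $r(z)\in[0,m[$ records where $z$ sits within $I_k$. The whole argument then reduces to two facts about $r$: that it separates the elements of $A$, and that it is large on compressed elements.

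First I would record why $r$ is injective on $A$. Since $z=\lambda(z)m-\rho+r(z)$, we have $r(z)\equiv z+\rho\pmod m$, so $r(z)$ is determined by the residue class of $z$ modulo $m$. Because $A=\ap(S,m)$ meets each residue class mod $m$ exactly once, distinct elements of $A$ have distinct residues, hence distinct values of $r$. In particular $z\mapsto r(z)$ is injective on $A$, and therefore on $C\subseteq A$ by Notation~\ref{C}.

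Next I would pin down the range of $r$ on $C$. Let $z\in C$ and fix a witnessing decomposition $z=x+y$ with $x,y\in S^*$; by the remark following the definition of compressedness (which invokes Corollary~\ref{cor level}) the equality $\lambda(z)=\lambda(x)+\lambda(y)-1$ holds, and by Lemma~\ref{downset} in fact $x,y\in A^*$. Writing $x=\lambda(x)m-\rho+r(x)$ and $y=\lambda(y)m-\rho+r(y)$ and adding, a one-line computation using $\lambda(z)=\lambda(x)+\lambda(y)-1$ gives $r(z)=m-\rho+r(x)+r(y)$. Since $r(x),r(y)\ge 0$, this forces $r(z)\ge m-\rho$, so $r(z)\in[m-\rho,m[$, a set of precisely $\rho$ integers. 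Combining with injectivity, $z\mapsto r(z)$ embeds $C$ into $[m-\rho,m[$, whence $|C|\le\rho$.

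The only genuine content is the third step: the observation that being compressed pushes an Apéry element into the top $\rho$ positions of its level interval. Once that confinement is in place, injectivity via distinct residues mod $m$ closes the argument with no further work. I would double-check the boundary bookkeeping—that $r(z)$ always lands in $[0,m[$, that the shift $r(z)\equiv z+\rho$ is compatible with $\rho\in[0,m[$, and the degenerate case $\rho=0$ (which should force $C=\emptyset$)—but I expect no real obstacle there.
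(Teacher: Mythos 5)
Your argument is correct and is essentially the paper's own proof in different clothing: your coordinate $r(z)=z-\lambda(z)m+\rho$ and the conclusion $r(z)\in[m-\rho,m[$ for compressed $z$ is exactly the paper's containment $z\in[(i+j)m-2\rho,(i+j)m-\rho[$, and your injectivity of $r$ on $A$ via residues is the paper's observation that $A$ meets each class mod $m$ at most once. No substantive difference.
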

\begin{proof} Let $z \in C$, and assume $z = x+y$ with $x,y \in A^*$ such that $\lambda(z)=\lambda(x)+\lambda(y)-1$. Say
$\lambda(x)=i$, $\lambda(y)=j$ and $\lambda(z)=i+j-1$. By the definition of $S_i$, we have
$$(S_i+S_j) \cap S_{i+j-1} \subseteq [(i+j)m-2\rho,(i+j)m-\rho[.
$$
Thus $z \in [(i+j)m-2\rho,(i+j)m-\rho[$. Now, the only classes mod $m$ occurring in the latter interval are those in $[-2\rho,-\rho[$, a set of cardinality $\rho$. Since there is only one element in $A$ per class mod $m$, and since $C \subset A$, the statement follows.
\end{proof}

In particular, we shall invoke the following simplified version.

\begin{corollary}\label{cor rho} For all $i,j \ge 2$, we have $\rho \ge |(A_i+A_j)\cap A_{i+j-1}|$.
\end{corollary}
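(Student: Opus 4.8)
The plan is to read this off as a special case of Proposition~\ref{deficit}. First I would fix $i,j \ge 2$ and show that the set $(A_i+A_j)\cap A_{i+j-1}$ consists entirely of compressed decomposable Apéry elements, so that it sits inside $C$. To see this, I would take any $z$ in this set and write $z=x+y$ with $x \in A_i$ and $y \in A_j$; then $\lambda(x)=i$ and $\lambda(y)=j$, while the hypothesis $z \in A_{i+j-1}$ forces $\lambda(z)=i+j-1=\lambda(x)+\lambda(y)-1$. In particular $\lambda(z)<\lambda(x)+\lambda(y)$, so $z$ is compressed. Membership in $A\cap D$ is immediate, since $z \in A_{i+j-1}\subseteq A$ and $z$ is a sum of two elements of $S^*$. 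This is precisely the inclusion already recorded in \eqref{on C}, namely $(A_i+A_j)\cap A_{i+j-1}\subseteq C$.

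Given this inclusion, I would conclude simply by taking cardinalities and invoking Proposition~\ref{deficit}, obtaining
$$\rho \ge |C| \ge |(A_i+A_j)\cap A_{i+j-1}|.$$
There is essentially no obstacle here: all the genuine content lives in Proposition~\ref{deficit}, whose proof counts the residue classes mod $m$ available in the narrow interval $[(i+j)m-2\rho,(i+j)m-\rho[$ where compressed elements must lie. The present statement merely isolates the contribution of a single pair of levels $(i,j)$ to the compressed-element count, so that one can bound $\rho$ from below by a concrete cardinality without having to describe the whole set $C$; this is exactly the form that will be convenient to apply in the later case analysis.
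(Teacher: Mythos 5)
Your proof is correct and follows exactly the paper's route: you re-derive the inclusion $(A_i+A_j)\cap A_{i+j-1}\subseteq C$ from \eqref{on C} (checking that each such element is a compressed member of $A\cap D$) and then apply $\rho\ge|C|$ from Proposition~\ref{deficit}. The paper's own proof is the same two-step deduction, just stated in one line.
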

\begin{proof}
Follows from \eqref{on C} and Proposition~\ref{deficit}.
\end{proof}

\subsection{Computing $W_0(S)$}

The following formulas allow to evaluate $W_0(S)$ using the Apéry profile of $S$ as defined in~Definition~\ref{Ai} and the decomposition $\a_q=\a_q'+\a_q''$ given by~Notation~\ref{prim and dec}. Recall that both $|L|$ and $|D_q|$ are involved in the expression of $W_0(S)$.
\begin{proposition}\label{prop formulas} Let $S \subseteq \N$ be a numerical semigroup with Apéry profile $\a(S)=(\a_1,\dots,\a_{q-1})$. Then
\begin{eqnarray*}
|L| & = & q +(q-1)\a_1+ \dots + \a_{q-1}, \\
|D_q| & = & \a_0+\a_1+\dots+\alpha_{q-1}+\alpha_q''.
\end{eqnarray*}
\end{proposition}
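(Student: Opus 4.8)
The plan is to derive both formulas from the partition of the left part $L$ and the interval $[c,c+m[$ according to the level function $\lambda$, combined with the counting identity $\alpha_i = \alpha_i' + \alpha_i''$ and the fact that there is exactly one Ap\'ery element per class mod $m$. The key observation for the first formula is that $L$ decomposes as a disjoint union over levels, $L = S_0 \sqcup S_1 \sqcup \dots \sqcup S_{q-1}$, since $L = S \cap [0,c[$ and $[0,c[ = I_0 \sqcup \dots \sqcup I_{q-1}$ by the definition of the $I_j$ and the relation $c = qm - \rho$. Thus $|L| = \sum_{i=0}^{q-1} |S_i|$, and the problem reduces to expressing each $|S_i|$ in terms of the $\alpha_j$.

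First I would compute $|S_i|$ for each level $0 \le i \le q-1$. The crucial point is that within each interval $I_j$, the Ap\'ery elements $A_j$ are precisely the \emph{smallest} representatives of their residue classes mod $m$; concretely, an element $x \in I_i$ lies in $S$ iff some $A_j$ with $j \le i$ contains the representative of the class of $x$, with the shift $x = a + (i-j)m$. This should yield $|S_i| = \alpha_0 + \alpha_1 + \dots + \alpha_i$, i.e. each Ap\'ery element at level $j$ contributes one element to every level $i \ge j$ below the conductor. Summing over $0 \le i \le q-1$ and using $\alpha_0 = 1$ gives
\begin{equation*}
|L| = \sum_{i=0}^{q-1}\sum_{j=0}^{i} \alpha_j = \sum_{j=0}^{q-1}(q-j)\alpha_j = q + (q-1)\alpha_1 + \dots + 2\alpha_{q-2} + \alpha_{q-1},
\end{equation*}
which is exactly the claimed expression once the $\alpha_0 = 1$ term is absorbed into the leading $q$.

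For the second formula, I would use the identity $m = |P_q| + |D_q|$ recorded earlier, together with the decomposition $m = \sum_{i=0}^q \alpha_i$ from \eqref{eq sum a_i}. The interval $I_q = [c,c+m[$ contains exactly one element per residue class mod $m$, and these split into primitive and decomposable ones; the primitives in $[c,c+m[$ number $\alpha_q' = |P \setminus L|$, while $|D_q| = |[c,c+m[ \setminus P|$. The guiding principle is that a class represented in $A_j$ for $j < q$ reappears in $I_q$ as a \emph{decomposable} element (it equals a smaller Ap\'ery element plus a positive multiple of $m$), contributing $\alpha_0 + \alpha_1 + \dots + \alpha_{q-1}$ to $|D_q|$, and the remaining decomposable Ap\'ery elements at level $q$ itself contribute $\alpha_q''$. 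This gives $|D_q| = \alpha_0 + \alpha_1 + \dots + \alpha_{q-1} + \alpha_q''$ as stated.

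The main obstacle will be establishing rigorously the per-level count $|S_i| = \sum_{j \le i}\alpha_j$ and the analogous statement for $I_q$: both rest on carefully tracking, for each residue class mod $m$, its smallest representative (its Ap\'ery element) and arguing that the class then populates every higher interval below the conductor exactly once. One must verify that no class is double-counted and that every element of $S_i$ (resp. every decomposable element of $I_q$) arises this way, which amounts to checking that $x \in S \cap I_i$ with $x$ not Ap\'ery forces $x - m \in S \cap I_{i-1}$, so that a clean induction on the level index goes through. Once this bookkeeping is in place, both formulas follow by straightforward summation.
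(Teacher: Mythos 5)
Your proof is correct and follows essentially the same double-counting argument as the paper: the paper sums over Ap\'ery elements $s \in A_i$ the quantity $|L \cap (s+m\N)| = q-i$, whereas you sum over levels the quantity $|S_i| = \sum_{j\le i}\a_j$, which is the same count of pairs (Ap\'ery element, left element in its class) with the order of summation swapped. Your derivation of the second formula from $m = |P_q| + |D_q| = \a_q' + |D_q|$ together with $m = \sum_{i=0}^q \a_i$ is identical to the paper's.
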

\begin{proof} 
Let $s \in L$ be minimal in its class mod $m$. Then $s \in L \cap A$. Let $i \ge 0$ be the unique integer such that $s \in A_i$. Then $0 \le i \le q-1$ since $s \in L$. Let $z \in L$ be such that $z \equiv s \bmod m$. Then $z = s +jm$ with $0 \le j \le q-1-i$. It follows that
$$
|L \cap (s+m\N)| = q-i.
$$
Letting now $s$ run through all elements of $L$ which are minimal in their respective classes mod $m$, the above discussion implies
$$|L|=q|A_0|+(q-1)|A_1|+ \dots +|A_{q-1}|,$$ 
yielding the first formula. Since $m=|P_q|+|D_q|=\a_q'+|D_q|$, it follows that 
$$|D_q|=m-\a_q'=m-\a_q+\a_q''.$$ The second formula now follows from~\eqref{eq sum a_i}, i.e. $m=\sum_{i=0}^q \a_i$.
\end{proof}

\subsection{Notation}\label{subsec notation}
We shall use the following notation throughout the remainder of the paper. Given a numerical semigroup $S$, we denote by $m$ its multiplicity, by $c$ its conductor, by $q$ its depth, by $\rho$ its offset, by $L$ its left part, by $P$ its set of primitive elements, by $D$ its set of decomposable elements, by $A = S \setminus (m+S)$ its Apéry set with respect to $m$, and by 
$$\a(S) = (\a_1,\dots,\a_{q-1})$$
its Apéry profile, where $\a_i=|A_i|$ for all $0 \le i \le q$. For $i \ge 1$, we have $\a_i=\a_i'+\a_i''$ where $\a_i'=|A_i \cap P|$ and $\a_i''=|A_i \cap D|$. We shall constantly use the formulas below to compute the ingredients involved in 
$$W_0(S) = |P \cap L||L|-q|D_q|+\rho,$$ namely
\begin{equation}
|P \cap L| = 1 + \sum_{i=1}^{q-1} \a_i', \ \ 
|L| = \sum_{i=0}^{q-1} (q-i)\a_i , \ \
|D_q| = \sum_{i=0}^{q-1} \a_i + \a_q''.
\end{equation}

Finally, as in the preceding section, we denote by $C$ the set of compressed Apéry elements, i.e
$$
C = \{z \in A\cap D \mid \exists x,y \in A^*, \, z=x+y,\, \lambda(z) < \lambda(x)+\lambda(y)\}.
$$
In a few cases, the estimate $\rho \ge |C|$ provided by Proposition~\ref{deficit} will be crucially needed in order to be able to conclude $W_0(S) \ge 0$.

\section{An occurrence of $W_0(S) \ge 0$}\label{section profile}

In this section, we establish $W_0(S) \ge 0$ for numerical semigroups $S$ under suitable assumptions on $\a(S)$ but not on $|L|$. We use the notation of Section~\ref{subsec notation} throughout.

\begin{theorem}\label{thm:h} Let $S$ be a numerical semigroup of depth $q \ge 4$. Let $h = \lceil q/2 \rceil$. Assume that $\alpha_i=0$ for all $1 \le i \le h-1$. Then $W_0(S) \ge 0$.
\end{theorem}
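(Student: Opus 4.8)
The plan is to bound $W_0(S)$ from below using the formulas from Section~\ref{subsec notation} together with the hypothesis that $\a_i = 0$ for all $1 \le i \le h-1$, where $h = \lceil q/2 \rceil$. Recall that
$$W_0(S) = |P \cap L||L| - q|D_q| + \rho,$$
with $|P \cap L| = 1 + \sum_{i=1}^{q-1} \a_i'$, $|L| = \sum_{i=0}^{q-1}(q-i)\a_i$, and $|D_q| = \sum_{i=0}^{q-1} \a_i + \a_q''$. Under the vanishing hypothesis, all the sums collapse: only the indices $i \in \{0\} \cup [h, q-1]$ contribute to $|L|$ and $|D_q|$, and only $i \in [h,q-1]$ contribute to $|P \cap L|$. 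In particular $|D_q| = 1 + \sum_{i=h}^{q-1}\a_i + \a_q''$, so the negative term $-q|D_q|$ is controlled by the same $\a_i$ with $i \ge h$ that drive $|L|$ upward.

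First I would observe that the surviving levels satisfy $i \ge h \ge q/2$, so each contributes a weight $q - i \le q - h = \lfloor q/2 \rfloor$ to $|L|$, while $|L|$ also carries the leading term $q$ coming from $\a_0 = 1$ (the multiples $0, m, \ldots, (q-1)m$). The key is that $|L| \ge q$ always, and more precisely $|L| = q + \sum_{i=h}^{q-1}(q-i)\a_i$. Since $|P \cap L| = 1 + \sum_{i=h}^{q-1}\a_i'$, the product $|P \cap L|\,|L|$ is at least $q$ plus cross terms. I would then bound $-q|D_q| = -q - q\sum_{i=h}^{q-1}\a_i - q\a_q''$ and try to absorb each negative contribution into the positive product. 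The natural matching pairs the term $q\a_i$ (for $h \le i \le q-1$) against a contribution of the form $|P \cap L| \cdot (q-i)\a_i$ inside the product; since $q - i \ge 1$ and one expects $|P \cap L|$ to be comparable to $q$ in the relevant regime, this should give a nonnegative balance level by level.

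The main obstacle, as usual in these arguments, will be the decomposable Apéry elements counted by $\a_i''$ (which inflate $|D_q|$ through $\a_q''$ and through $\a_i = \a_i' + \a_i''$) together with the term $\a_q''$ itself, which feeds $-q\a_q''$ into $W_0$ without any direct compensating appearance in $|P \cap L|$ or the bulk of $|L|$. To handle these I anticipate needing the inequality $\rho \ge |C|$ from Proposition~\ref{deficit}, or its simplified form in Corollary~\ref{cor rho}: a decomposable element $z \in A_i$ with $i \ge h$ must arise as $z = x + y$ with $x, y \in A^*$ of levels $\ge h \ge q/2$, but then $\lambda(x) + \lambda(y) \ge 2h \ge q$, and since $z \in A_i$ with $i \le q$ this forces such sums to be compressed, placing them in $C$ and hence contributing to $\rho$. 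Quantifying this — showing that the decomposable elements at the surviving high levels are all compressed, so that $\rho$ absorbs exactly the deficit created by $\a_q''$ and the $\a_i''$ — is where the hypothesis $h = \lceil q/2 \rceil$ is used essentially, and is the step I expect to require the most care.
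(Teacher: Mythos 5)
Your setup is the same as the paper's (collapse the formulas for $|L|$, $|P\cap L|$, $|D_q|$ under the vanishing hypothesis, observe via Lemma~\ref{downset} that every decomposable Ap\'ery element is a sum of two Ap\'ery elements of level $\ge h$, and invoke $\rho \ge |C|$), but the proposal stops exactly where the theorem's actual content begins, and the one quantitative claim you do make is wrong. First, it is not true that all decomposable Ap\'ery elements at the surviving levels are compressed: when $q$ is even, $h = q/2$ and an element $z \in (A_h + A_h) \cap A_q$ has $\lambda(z) = q = \lambda(x)+\lambda(y)$, so it is \emph{not} compressed and contributes nothing to $|C|$. Your inequality $\lambda(x)+\lambda(y) \ge 2h \ge q$ only forces compression when the inequality $2h \ge q$ is strict, i.e.\ when $q$ is odd. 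The paper handles the even case separately, splitting $A \cap D$ into $2A_h$ (bounded crudely by $\a_h(\a_h+1)/2$, with no appeal to compression) and $(A_h + A_{h+1}) \cap A_q$ (which is contained in $C$, hence bounded by $\min(\rho, \a_h\a_{h+1})$ via Corollary~\ref{cor rho}).

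Second, and more fundamentally, $\rho$ cannot ``absorb exactly the deficit'': $\a_q''$ enters $W_0(S)$ with coefficient $-q$ while $\rho$ enters with coefficient $+1$, so even knowing $A_q \cap D \subseteq C$ and hence $\rho \ge \a_q''$ only improves $-q\a_q'' + \rho$ to $-(q-1)\a_q''$. That residual term of order $q\a_q''$ must still be beaten by the product $|P\cap L|(|L|-q)$, and this is where all the work lies. In the odd case the paper bounds $\a_q'' \le |2A_h| \le \a_h(\a_h+1)/2$ and checks that
$$(1+\a_h)(h-1)\a_h - (q-1)\,\frac{\a_h(\a_h+1)}{2} = 0$$
using $(q-1)/2 = h-1$; the cancellation is exact, with no slack. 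Your proposed level-by-level matching of $-q\a_i$ against $|P\cap L|(q-i)\a_i$, on the grounds that $|P\cap L|$ is ``comparable to $q$,'' does not survive contact with this computation: $|P\cap L|$ can be as small as $1+\a_h$, the weights $q-i$ drop to $1$ at $i=q-1$, and in any case the $\a_i'$ contributions cancel identically between $|P\cap L|$ and $|D_q|$, so the only negative term to beat is $-q(\sum_{i\ge h}\a_i'' + \a_q'')$. Supplying the quadratic bound on $|2A_h|$, the elementary estimate $-q\min(t,\rho)+\rho \ge -(q-1)t$, and the final exact identity in each parity is not a routine verification to be deferred --- it is the proof.
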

\begin{proof} Since $P = \cup_{1 \le i \le q}$ and $P_1=\{m\} \sqcup A_1$, we have $|P|=1+\a_1+\a'_2+\dots+\a'_{q}$ and
\begin{equation}\label{P cap L}
|P \cap L| = 1+\a_1+\a'_2+\dots+\a'_{q-1}.
\end{equation}

\medskip
\noindent
\textbf{\sbu Assume $q$ odd.} Then $h = (q+1)/2$. Since $A_1=\dots=A_{h-1}=\emptyset$, we have
$$
A \cap D \subseteq \bigcup_{i,j=h}^{q-1} (A_i+A_j).
$$
By Proposition~\ref{addition}, we have
$
A_i+A_j \subseteq S_{i+j-1} \sqcup S_{i+j} \sqcup S_{i+j+1}.
$
Since $A_t = \emptyset$ for $t \ge q+1$, and since $2h=q-1$, it follows from the above that 
$$
A \cap D \subseteq 2A_h.
$$
Hence $\a_q'' \le |2A_h|$. It also follows that $A_i=P_i$ for all $h \le i \le q-1$. Hence
\begin{eqnarray*}
|P \cap L| & = & 1+\a_h+\dots+\a_{q-1} \\
& \ge & 1+\a_h, \\
|L| & = & q + (h-1)\a_h+(h-2)\a_{h+1}+\dots+\a_{q-1} \\
& \ge & q + (h-1)\a_h, \\
|D_q| & = & 1+\a_h+\dots+\a_{q-1}+\a_q'' \\
& = & |P \cap L|+\a_q''.
\end{eqnarray*}
Thus,
\begin{eqnarray*}
W_0(S) & = & |P \cap L||L|-q|D_q|+\rho \\
& = & |P \cap L||L|-q(|P \cap L|+\a_q'')+\rho \\
& = & |P \cap L|(|L|-q)-q\a_q''+\rho \\
& = & (1+\a_h)(h-1)\a_h-q\a_q''+\rho.
\end{eqnarray*}

Since $A \cap D_q \subseteq 2A_h$, and since $q=2h-1$, it follows from Lemma~\ref{deficit} that $A_q \cap D \subseteq C$, where $C$ is the subset defined in that Lemma. Consequently, we have $|A_q \cap D| \le |C| \le \rho$, whence
$$
\a_q'' \le \min(|2A_h|, \rho).
$$
Therefore
$$
W_0(S) \ge (1+\a_h)(h-1)\a_h-q\min(|2A_h|, \rho)+\rho.
$$
The following bound will take care of the last two summands. 

\medskip
\noindent
\textbf{Claim.} For all $t \ge 0$, we have
\begin{equation}\label{easy}
-q\min(t,\rho)+\rho \ \ge \ -(q-1)t.
\end{equation}
Indeed, if $\rho \le t$, then $-q\min(t,\rho)+\rho = -(q-1)\rho \ge -(q-1)t$. And if $\rho > t$, then $-q\min(t,\rho)+\rho = -qt+\rho > -(q-1)t$. This proves the claim.

\smallskip
Moreover, as a very crude estimate, we have
$$
|2A_h| \le \a_h(\a_h+1)/2.
$$
Hence, using \eqref{easy} and the above, we get 
$$
W_0(S) \ge (1+\a_h)(h-1)\a_h-(q-1)\a_h(\a_h+1)/2.
$$
Since $(q-1)/2=h-1$, it follows that $W_0(S) \ge 0$, as desired.

\medskip
\noindent
\textbf{\sbu Assume $q$ even.} Then $h = q/2$. Since $A_1=\dots=A_{h-1}=\emptyset$, we have
$$
A \cap D \subseteq \bigcup_{i,j=h}^{q-1} (A_i+A_j).
$$
By Proposition~\ref{addition}, we have
$
A_i+A_j \subseteq S_{i+j-1} \sqcup S_{i+j} \sqcup S_{i+j+1}.
$
Since $A_t = \emptyset$ for $t \ge q+1$, and since $2h=q$, it follows from the above that 
$$
A \cap D \subseteq (2A_h) \cup (A_h+A_{h+1}).
$$
Moreover, we have
$$
2A_h \cap A \subseteq A_{q-1} \cup A_q, \quad (A_h+A_{h+1}) \cap A \subseteq A_{q}.
$$
Hence
$$
A \cap D \subseteq (2A_h \cap A_{q-1}) \cup (2A_h \cap A_{q}) \cup (A_h+A_{h+1}) \cap A_{q}. 
$$
We have
\begin{eqnarray*}
\a_{q-1}'' & = & |2A_h \cap A_{q-1}|, \\
\a_q'' & = & |(2A_h \cap A_{q}) \cup (A_h+A_{h+1}) \cap A_{q}|.
\end{eqnarray*}
Hence 
\begin{equation}\label{q-1 et q}
\a_{q-1}''+\a_q'' \le |2A_h|+|(A_h+A_{h+1}) \cap A_{q}|.
\end{equation}
Moreover, since $q=2h$, Corollary~\ref{cor rho} yields 
$$
|(A_h+A_{h+1}) \cap A_{q}| \le \rho.
$$
Moreover, we have $|(A_h+A_{h+1}) \cap A_{q}| \le \a_h\a_{h+1}$. Hence 
\begin{equation}\label{h et h+1}
|(A_h+A_{h+1}) \cap A_{q}| \le \min(\rho, \a_h\a_{h+1}).
\end{equation}
Combining \eqref{q-1 et q} and \eqref{h et h+1}, we get
\begin{equation}\label{combin}
\a_{q-1}''+\a_q'' \le |2A_h|+\min(\rho, \a_h\a_{h+1}).
\end{equation}
It also follows that $A_i=P_i$ for all $h \le i \le q-2$. Hence
\begin{eqnarray*}
|P \cap L| & = & 1+\a_h+\dots+\a_{q-2}+\a_{q-1}' \\
& \ge & 1+\a_h+\a_{h+1}, \\
|L| & = & q + h\a_h+(h-1)\a_{h+1}+\dots+\a_{q-1} \\
& \ge & q + h\a_h+(h-1)\a_{h+1}, \\
|D_q| & = & 1+\a_h+\dots+\a_{q-1}+\a_q'' \\
& = & |P \cap L|+\a_{q-1}''+\a_q''.
\end{eqnarray*}
Thus,
\begin{eqnarray*}
W_0(S) & = & |P \cap L||L|-q|D_q|+\rho \\
& = & |P \cap L||L|-q(|P \cap L|+\a_{q-1}''+\a_q'')+\rho \\
& = & |P \cap L|(|L|-q)-q(\a_{q-1}''+\a_q'')+\rho \\
& \ge & (1+\a_h+\a_{h+1})(h\a_h+(h-1)\a_{h+1}-q(\a_{q-1}''+\a_q'')+\rho.
\end{eqnarray*}
Therefore, using \eqref{combin}, we get
$$
W_0(S) \ge (1+\a_h+\a_{h+1})(h\a_h+(h-1)\a_{h+1})-q(|2A_h|+\min(\rho, \a_h\a_{h+1}))+\rho.
$$
Using \eqref{easy} again for the last two summands, we have 
$$-q\min(\rho, \a_h\a_{h+1})+\rho \ge -(q-1)\a_h\a_{h+1.}$$ We also have 
the very crude estimate
$$
|2A_h| \le \a_h(\a_h+1)/2.
$$
Hence
$$
W_0(S) \ge (1+\a_h+\a_{h+1})(h\a_h+(h-1)\a_{h+1})-q\a_h(\a_h+1)/2-(q-1)\a_h\a_{h+1}.
$$
Using $q=2h$, it follows that 
$$W_0(S) = (h-1)\a_{h+1}(\a_{h+1}+1).$$ 
Hence $W_0(S) \ge 0$, as desired.
\end{proof}

We conclude this section with an easy particular case.

\begin{proposition} Let $S$ be a numerical semigroup of depth $q \ge 4$ such that $|P \cap L| \ge \max(\a''_q,q)$. Then $W_0(S) \ge \rho$.
\end{proposition}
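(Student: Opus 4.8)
The plan is to reduce the claim $W_0(S) \ge \rho$ to the inequality $|P \cap L|\,|L| \ge q|D_q|$, since $W_0(S) = |P \cap L|\,|L| - q|D_q| + \rho$. Writing $p = |P \cap L|$ for brevity and substituting the formulas $|L| = \sum_{i=0}^{q-1}(q-i)\a_i$ and $|D_q| = \sum_{i=0}^{q-1}\a_i + \a_q''$ from Section~\ref{subsec notation}, I would expand
\[
p|L| - q|D_q| = \sum_{i=0}^{q-1}\big(p(q-i) - q\big)\a_i - q\a_q''.
\]
The first observation is that, since $p \ge q$ and $q - i \ge 1$ for $1 \le i \le q-1$, every coefficient $p(q-i) - q$ with $i \ge 1$ is nonnegative; moreover the term $i = 0$ equals $q(p-1)$ because $\a_0 = 1$. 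Dropping the nonnegative terms leaves the clean lower bound $p|L| - q|D_q| \ge q(p-1) - q\a_q'' = q(p - 1 - \a_q'')$.

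I would then split on how tight the hypothesis $p \ge \a_q''$ is. If $p \ge \a_q'' + 1$, the displayed bound is already $\ge 0$ and we are done. The only delicate case is the equality $p = \a_q''$, where this crude estimate falls short by exactly $q$; here I must recover the deficit from the discarded terms indexed by $1 \le i \le q-1$, i.e.\ establish $\sum_{i=1}^{q-1}\big(p(q-i)-q\big)\a_i \ge q$.

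This tight case is the main obstacle, and I would handle it with the level function. Since $\a_q'' = p \ge q \ge 4 > 0$, the set $A_q \cap D$ is nonempty, so fix a decomposable $z \in A_q$ and write $z = x + y$ with $x, y \in A^*$ (Lemma~\ref{downset}). Setting $a = \lambda(x)$ and $b = \lambda(y)$, Corollary~\ref{cor level} gives $a + b - 1 \le \lambda(z) = q$ and $\max(a,b) < q$, so $a + b \le q+1$ with $a, b \le q - 1$. If both $a, b \ge q - 1$ then $a + b \ge 2q - 2 > q + 1$ because $q \ge 4$, a contradiction; hence $\min(a,b) \le q - 2$. Thus some $A_{i_0}$ with $1 \le i_0 \le q - 2$ is nonempty, so $\a_{i_0} \ge 1$, and the corresponding term contributes at least $p(q - i_0) - q \ge 2p - q \ge q$, the remaining terms being nonnegative. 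This yields $\sum_{i=1}^{q-1}\big(p(q-i)-q\big)\a_i \ge q$ and hence $p|L| - q|D_q| \ge 0$, completing the argument. The crucial use of $q \ge 4$ is precisely in ruling out that every decomposable element of $A_q$ splits into two summands of level $q-1$, which is exactly where the depth hypothesis enters.
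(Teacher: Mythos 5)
Your proof is correct, and it is in fact more careful than the paper's own argument. Both proofs start from the same expansion of $|P \cap L||L|-q|D_q|$ over the Ap\'ery profile, but the paper's displayed identity reads $W_0(S) = q(|P \cap L|-\a''_q)+\sum_{i=1}^{q-1}((q-i)|P \cap L|-q)\a_i + \rho$, whereas the $i=0$ term actually contributes $q(|P\cap L|-1)$ because $\a_0=1$, so the correct constant term is $q(|P\cap L|-1-\a_q'')$ --- exactly the lower bound you derive. As a consequence, the paper's one-line conclusion that all terms are nonnegative breaks down precisely in the boundary case $|P\cap L|=\a_q''$, which is the case you isolate and treat separately. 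Your recovery of the missing $q$ there is sound: since $\a_q''=|P\cap L|\ge q>0$, some $z\in A_q\cap D$ exists; Lemma~\ref{downset} gives $z=x+y$ with $x,y\in A^*$; the estimate $\lambda(x)+\lambda(y)-1\le\lambda(z)=q$ holds in both branches of Corollary~\ref{cor level}; and $2q-2>q+1$ uses exactly $q\ge 4$ to force $\min(\lambda(x),\lambda(y))\le q-2$, so that some $\a_{i_0}\ge 1$ with $1\le i_0\le q-2$ contributes at least $2|P\cap L|-q\ge q$ to the discarded sum. Two small remarks: the inequality $\max(\lambda(x),\lambda(y))<q$ comes from the unnumbered corollary following Corollary~\ref{cor level} rather than from Corollary~\ref{cor level} itself, but you do not actually need it, since $\min(\lambda(x),\lambda(y))\le q-2$ already follows from $\lambda(x)+\lambda(y)\le q+1$; and your case split is not redundant caution --- it fills a genuine gap in the published argument.
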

\begin{proof} $W_0(S) = |P \cap L||L|-q|D_q|+\rho$. Recall that $|L|=\sum_{i=0}^{q-1} (q-i)\a_i$ and that $|D_q|=\sum_{i=0}^{q-1}\a_i + \a''_q$. Hence
$$
W_0(S) = q(|P \cap L|-\a''_q)+\sum_{i=1}^{q-1}((q-i)|P \cap L|-q)\a_i + \rho.
$$
Since $|P \cap L| \ge \a_q''$ and $|P \cap L| \ge q$ by hypothesis, the claimed inequality follows.
\end{proof}

\section{Main result}\label{main}

Let $S$ be a numerical semigroup. We use the notation of Section~\ref{subsec notation} throughout. Wilf's conjecture has been successively settled for $|L| \le 4$ and $|L| \le 6$ in \cite{cota4} and \cite{WilfMacaulay}, respectively. Here we extend these results to the case $|L| \le 12$. Even more so, we show that if $|L| \le 12$ then $W_0(S) \ge 0$. As mentioned earlier, this is best possible, since there are numerical semigroups $S$ satisfying $|L|=13$ and $W_0(S) <0$. At the time of writing, it remains an open problem to determine whether all numerical semigroups $S$ with $|L|=13$ satisfy Wilf's conjecture. In this section we prove the following result.

\begin{theorem}\label{main thm}
Let $S$ be a numerical semigroup such that $|L|\leq 12$. Then $W_0(S)\geq 0$. In particular, $S$ satisfies Wilf's conjecture.
\end{theorem}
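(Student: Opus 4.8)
The plan is to reduce to finitely many Apéry profiles of small depth and then dispatch them by combining Theorem~\ref{thm:h} with a direct lower bound on $W_0(S)$, the decisive cases being the low-depth profiles sitting just below the known $|L|=13$ counterexamples. First I would use Theorem~\ref{q le 3} to assume $q \ge 4$. Since $L$ contains the $q$ distinct multiples $0,m,\dots,(q-1)m$, all strictly below $c = qm - \rho$ because $\rho < m$, we have $|L| \ge q$, hence $4 \le q \le 12$. The identity $|L| = q + \sum_{i=1}^{q-1}(q-i)\alpha_i$ of Proposition~\ref{prop formulas} then confines each profile to the finitely many tuples obeying $\sum_{i=1}^{q-1}(q-i)\alpha_i \le 12 - q$. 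Writing $h = \lceil q/2\rceil$, I would note that for $q \ge 8$ every coefficient $q-i$ with $1 \le i \le h-1$ exceeds the budget $12-q$, so necessarily $\alpha_1 = \dots = \alpha_{h-1} = 0$ and Theorem~\ref{thm:h} applies directly. This leaves only the depths $q \in \{4,5,6,7\}$, and within each only the finitely many profiles in which some early entry $\alpha_i$ with $i < h$ is nonzero (the profiles with vanishing leading entries again being settled by Theorem~\ref{thm:h}).

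For the surviving profiles I would bound $W_0(S)$ from below by hand. The data not fixed by the profile are the decomposable counts $\alpha_i'' = |A_i \cap D|$ for $i \ge 2$ (note $\alpha_1'' = 0$, since any decomposable element has level $\ge 2$) and the offset $\rho$. As each $\alpha_i''$ grows, $|P \cap L| = 1 + \sum_{i=1}^{q-1}(\alpha_i - \alpha_i'')$ drops and $|D_q| = 1 + \sum_{i=1}^{q-1}\alpha_i + \alpha_q''$ rises, so $W_0$ is smallest when the decomposable counts are as large as the additive structure allows. I would therefore cap each $\alpha_k''$ by the number of representations $z = x+y$ with $x,y \in A^*$ and $\lambda(x)+\lambda(y) \in \{k-1,k,k+1\}$, which the proposition bounding $A_k \cap D$ by unions of sumsets $A_i + A_j$ turns into an estimate in terms of the $\alpha_i$ alone; substituting these caps, together with $\rho \ge |C|$ when useful, and checking $W_0 \ge 0$ disposes of each profile.

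The hard part is the tightest low-depth cases, above all $q=4$ with $|L|$ near $12$, which lie immediately beneath the counterexamples (the first of genus $43$) having $|L| = 13$, $c = 4m$ and $W_0 = -1$. There the offset is frequently $\rho = 0$ — exactly as in the counterexample — so the bound $\rho \ge |C|$ yields nothing, and one must instead exploit the rigidity it imposes: when $\rho = 0$ we have $C = \emptyset$, so no decomposable Apéry element has a compressed representation and every $z = x+y \in A \cap D$ obeys $\lambda(x)+\lambda(y) \le \lambda(z)$. This forbids the level-raising sumset contributions and caps $\alpha_q''$ strictly below the value that would force $W_0 < 0$. When $\rho > 0$ one instead folds $\rho \ge |C|$ into the estimate, as in the proof of Theorem~\ref{thm:h}, to absorb the surplus decomposables. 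Balancing these two mechanisms profile by profile, and confirming that the crossover from $W_0 \ge 0$ to $W_0 < 0$ falls precisely between $|L| = 12$ and $|L| = 13$, is the delicate quantitative core of the argument.
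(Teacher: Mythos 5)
Your skeleton matches the paper's: reduce to $q\ge 4$ via Theorem~\ref{q le 3}, kill $q\ge 8$ by showing $\a_1=\dots=\a_{h-1}=0$ and invoking Theorem~\ref{thm:h} (your budget computation $(q+1)/2>12-q$ for $q\ge 8$ is the content of Proposition~\ref{prop q ge 8}), enumerate the remaining profiles for $4\le q\le 7$ via $|L|=q+\sum_{i=1}^{q-1}(q-i)\a_i$, and close each one by bounding $\a_q''$ through the sumset structure of $A$ together with $\rho\ge|C|$. But the proposal stops at the point where the actual proof begins: the entire case analysis --- which occupies all of Section~\ref{main} and is the substance of the theorem --- is declared rather than carried out. ``Substituting these caps \dots\ and checking $W_0\ge 0$ disposes of each profile'' is precisely the claim that needs verification, and it is not true as stated for your toolkit.

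Concretely, the uniform mechanism you describe (cap $\a_k''$ by counting representations $x+y$ with $\lambda(x)+\lambda(y)\in\{k-1,k,k+1\}$, then add $\rho\ge|C|$, with the $\rho=0$ rigidity as fallback) is insufficient in several of the tightest subcases. Take $q=4$, $\a(S)=(1,1,3)$ with $A_2=\{y\}\subseteq P$ and $\a_3''=3$, so $A_3\cap D=\{2x,x+y,2y\}$ and $|L|=12$, $|P\cap L|=3$. The best sumset cap gives $\a_4''\le 4$, hence $|D_4|\le 10$ and $W_0(S)\ge -4+\rho$; enumerating compressed elements yields at most $C\subseteq\{2y,2x+y,3y\}$, so $\rho\ge|C|$ gives only $\rho\ge 3$ and $W_0(S)\ge -1$, and the $\rho=0$ argument is vacuous here since $2y\in C$ already forces $\rho\ge 1$. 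The paper closes this case by a positional argument of a different nature: from $2m-\rho\le y$ and $3y\le 5m-\rho-1$ one gets $m\le 2\rho-1$, and $m=\sum_i\a_i\ge 10$ forces $\rho\ge 6$. Similar metric arguments (not reducible to $\rho\ge|C|$) are needed at $q=5$, $\a(S)=(0,1,0,\a_4)$ when $3x\in A_5$, and at $q=6$ to exclude $3x\in A_6$. Moreover, the sharp caps on $\a_q''$ themselves require conditioning on the contents of $A_{q-1}\cap D$ via Lemma~\ref{downset} (e.g.\ $4x\in A_4$ forces $3x\in A_3$), not just a global representation count. So the proposal identifies the right strategy and correctly locates where the difficulty sits, but it neither executes the finite casework nor supplies the extra arguments that several of those cases demonstrably require.
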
 

By Theorem~\ref{q le 3}, the bound $W_0(S) \ge 0$ holds for all numerical semigroups of depth $q \le 3$. Consequently, in the sequel, we shall freely assume $q \ge 4$, since it suffices to prove Theorem~\ref{main thm} in that case. In fact, it also suffices to consider the case $q \le 7$, as follows from the following proposition.

\begin{proposition}\label{prop q ge 8}  Let $S$ be a numerical semigroup of depth $q \ge 8$ such that $|L| \le 12$. Then $W_0(S) \ge 0$.
\end{proposition}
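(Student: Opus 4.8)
The plan is to exploit the key structural fact that large depth $q$ forces the profile entries $\a_i$ to be very small relative to $q$, after which the formulas of Section~\ref{subsec notation} make $W_0(S)$ manifestly nonnegative. First I would note that since $|L| = \sum_{i=0}^{q-1}(q-i)\a_i \ge q + (q-1)\a_1 + (q-2)\a_2 + \dots + \a_{q-1}$ and $|L| \le 12$ with $q \ge 8$, the constraint is extremely tight. In particular $q \le |L| \le 12$, so $8 \le q \le 12$, and the ``budget'' $|L| - q \le 12 - q \le 4$ must cover all the weighted terms $(q-i)\a_i$. Since each nonzero $\a_i$ with $i \le q-1$ contributes at least its weight $(q-i) \ge 1$, and the lower-index entries carry large weights, almost all $\a_i$ must vanish. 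Concretely, $\a_i \ge 1$ is only possible when $q - i \le 12 - q$, i.e. $i \ge 2q - 12$; for $q \ge 8$ this means only the top few levels $i$ (close to $q-1$) can be nonzero.

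The main step is then to show that this forces $\a_i = 0$ for all $1 \le i \le h-1$ where $h = \lceil q/2 \rceil$, so that Theorem~\ref{thm:h} applies directly and yields $W_0(S) \ge 0$. The threshold derived above is $i \ge 2q - 12$, so I must check $2q - 12 \ge h = \lceil q/2 \rceil$, equivalently $2q - 12 \ge \lceil q/2 \rceil$. For $q = 8$ this reads $4 \ge 4$; for $q = 9,\dots,12$ the left side grows by $2$ each step while $\lceil q/2 \rceil$ grows by at most $1$, so the inequality holds throughout the range $8 \le q \le 12$. Hence every level $i < h$ that could carry positive $\a_i$ is ruled out, giving exactly the hypothesis of Theorem~\ref{thm:h}. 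I would verify the boundary case $q=8$ carefully, since it is the tightest: there the only admissible positive entries are among $\a_4,\dots,\a_7$, all of which lie at level $\ge h = 4$.

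An alternative, and perhaps cleaner, route is to bypass Theorem~\ref{thm:h} and argue directly from the formula $W_0(S) = |P\cap L|\,|L| - q|D_q| + \rho$. Using $|D_q| = \sum_{i=0}^{q-1}\a_i + \a_q''$ and $|P\cap L| = 1 + \sum_{i=1}^{q-1}\a_i'$, together with the smallness of the $\a_i$, one writes $W_0(S) = q(|P\cap L| - \a_q'') + \sum_{i=1}^{q-1}\big((q-i)|P\cap L| - q\big)\a_i + \rho$, exactly as in the final proposition of Section~\ref{section profile}. Because the surviving levels satisfy $i \ge 2q-12 \ge h$, each factor $(q-i)|P\cap L| - q$ can be controlled, and one reduces to bounding $\a_q''$ via the compressed-elements inequality $\rho \ge |C|$ when necessary. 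I expect the main obstacle to be the verification that $|P\cap L| \ge \a_q''$ (so that the $q(|P\cap L| - \a_q'')$ term is nonnegative): this requires relating the decomposable top-level Apéry elements back to sums of the few nonzero lower levels and invoking Proposition~\ref{deficit} to absorb any deficit into $\rho$. Given the cleanliness of the reduction to Theorem~\ref{thm:h}, however, I would present that as the primary argument and keep the direct computation as a fallback for any residual case.
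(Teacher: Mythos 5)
Your primary argument is correct and is essentially the paper's proof: both derive $\a_i=0$ for all $1\le i\le h-1$ from the bound $12\ge |L|\ge q+\sum_{i=1}^{q-1}(q-i)\a_i$ (you via the threshold $i\ge 2q-12$ compared with $h$, the paper via the contradiction $|L|\ge (3q+1)/2>12$), and then both conclude by Theorem~\ref{thm:h}. The sketched alternative route is unnecessary given that the reduction to Theorem~\ref{thm:h} is complete.
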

\begin{proof} Let $h = \lceil q/2 \rceil$. Then
$
12 \ge |L| \ge q+(q-1)\a_1+\dots+(q-h+1)\a_{h-1}.
$
This implies $\a_i=0$ for all $1 \le i \le h-1$. For if not, then $|L| \ge q+(q-h+1) $, and since $h \le (q+1)/2$, we would get $12 \ge |L| \ge 2q-(q+1)/2+1=(3q+1)/2$ 
and hence $3q+1 \le 24$, contrary to the hypothesis $q \ge 8$. It now follows from Theorem~\ref{thm:h} that $W_0(S) \ge 0$.
\end{proof}

Finally, the following result strongly restricts the values of $\a_1$ to consider.

\begin{lemma} Let $S$ be a numerical semigroup of depth $q \ge 4$ such that $|L| \le 12$. Then $\a_1 \le 2$.
\end{lemma}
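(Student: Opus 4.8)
The goal is to show that the assumptions $q \ge 4$ and $|L| \le 12$ force $\a_1 \le 2$. The plan is to read off the contribution of $\a_1$ to the formula for $|L|$ established in Proposition~\ref{prop formulas}, namely
$$
|L| = q + (q-1)\a_1 + (q-2)\a_2 + \dots + \a_{q-1}.
$$
Since every $\a_i \ge 0$ and $q \ge 4$, we immediately get the lower bound
$$
|L| \ge q + (q-1)\a_1 \ge 4 + 3\a_1.
$$

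First I would observe that if $\a_1 \ge 3$, then $|L| \ge 4 + 9 = 13 > 12$, contradicting the hypothesis $|L| \le 12$. This already disposes of the case $q = 4$ directly. For larger $q$ the bound is only stronger, since the coefficient $q-1$ of $\a_1$ grows: if $\a_1 \ge 3$ then $|L| \ge q + 3(q-1) = 4q - 3 \ge 13$ whenever $q \ge 4$. So in all cases $\a_1 \ge 3$ is incompatible with $|L| \le 12$, forcing $\a_1 \le 2$.

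The only subtlety worth checking is the edge case where $q$ is small and the retained terms of the sum are few; but since all coefficients $q-i$ for $1 \le i \le q-1$ are positive and all $\a_i$ are nonnegative, dropping every term except the $\a_1$ term is a valid lower bound regardless of $q$. There is no genuine obstacle here: the statement is an immediate numerical consequence of the $|L|$ formula together with $q \ge 4$. I expect the proof to be a two-line computation, with the only care being to invoke the correct coefficient $q - 1 \ge 3$ of $\a_1$ and to note that the bound degrades gracefully (in fact improves) as $q$ increases.
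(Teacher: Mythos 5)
Your proof is correct and follows exactly the paper's argument: drop the nonnegative terms $(q-i)\a_i$ for $i \ge 2$ in the formula $|L| = q + (q-1)\a_1 + \dots + \a_{q-1}$, then note that $\a_1 \ge 3$ and $q \ge 4$ would force $|L| \ge 4 + 3\cdot 3 = 13$, contradicting $|L| \le 12$. No gaps; the extra remarks about larger $q$ only strengthen the same bound.
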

\begin{proof} By Proposition~\ref{prop formulas}, we have
$|L| = q+(q-1)\a_1+(q-2)\a_2+\cdots+\a_{q-1}$.
Hence $|L| \ge q + (q-1)\a_1$. We have $q \ge 4$. If $\a_1 \ge 3$ then $|L| \ge 4+9=13$, contrary to the hypothesis on $|L|$.
\end{proof}

The cases $\a_1=2$, $1$ and $0$ will now be treated successively. We shall occasionally use the following notation.

\begin{notation} For all $i \ge 1$, we denote by $\ind_i=\ind_{A_i}$ the indicator function of $A_i$.
\end{notation}

\subsection{When $\a_1=2$}\label{a_1=2}

\begin{proposition} Let $S$ be a numerical semigroup of depth $q \ge 4$ such that $|L| \le 12$. If $\a_1=2$, then $q=4$ and $W_0(S) \ge 0$.
\end{proposition}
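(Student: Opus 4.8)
The plan is first to pin down the depth. By Proposition~\ref{prop formulas} we have $|L| = q+(q-1)\alpha_1+(q-2)\alpha_2+\cdots+\alpha_{q-1}$, so with $\alpha_1=2$ and all later terms nonnegative, $|L| \ge q+2(q-1)=3q-2$. The hypothesis $|L|\le 12$ then forces $3q-2\le 12$, i.e. $q\le 4$, and since $q\ge 4$ we get $q=4$. Substituting $q=4$ and $\alpha_1=2$ back into the same formula gives $|L| = 10+2\alpha_2+\alpha_3\le 12$, hence $2\alpha_2+\alpha_3\le 2$. This leaves exactly four admissible profiles $\alpha(S)=(\alpha_1,\alpha_2,\alpha_3)$, namely $(2,0,0)$, $(2,0,1)$, $(2,0,2)$ and $(2,1,0)$, which I will handle in turn.

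For each profile I will evaluate $W_0(S)=|P\cap L||L|-q|D_q|+\rho$ via the three formulas of Section~\ref{subsec notation}. Since by the level inequalities (Corollary~\ref{cor level}) every decomposable element has level $\ge 2$, each element of $A_1$ is primitive, so $\alpha_1'=\alpha_1=2$ and thus $|P\cap L| = 3+\alpha_2'+\alpha_3'$. With $q=4$ all ingredients are then determined by the profile except for $\alpha_2',\alpha_3'$ (bounded by $\alpha_2,\alpha_3$) and, crucially, $\alpha_4''$, the number of \emph{decomposable} Apéry elements at the top level, which enters $|D_q|=\sum_{i=0}^{3}\alpha_i+\alpha_4''$ with the damaging coefficient $-q=-4$. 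The whole argument therefore reduces to producing, in each case, a sharp upper bound on $\alpha_4''$.

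To bound $\alpha_4''$ I will use the inclusion $A_4\cap D\subseteq\bigcup(A_i+A_j)$ over $1\le i\le j$ with $3\le i+j\le 5$, together with the level arithmetic of Corollary~\ref{cor level} and the factorization Lemma~\ref{downset}. Because these profiles have several empty levels, only very few pairs $(i,j)$ can contribute an element of level exactly $4$: the pair $(1,4)$ always produces level $\ge 5$ and is discarded, a sum of two level-$1$ elements has level $\le 3$, and so on. This confines $A_4\cap D$ to $A_1+A_3$ when $A_2=\emptyset$, and to $(A_1+A_2)\cup 2A_2$ when $A_3=\emptyset$, yielding via $|A_i+A_j|\le|A_i||A_j|$ the bounds $\alpha_4''\le 0,\,2,\,4,\,3$ in the four cases respectively. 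Feeding these in, the cases $(2,0,0)$, $(2,0,1)$ and $(2,1,0)$ give comfortable lower bounds ($W_0(S)\ge 18,\,9,\,8$ even after discarding the nonnegative term $\rho$).

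The main obstacle is the tight profile $(2,0,2)$. Here $|L|=12$, $|P\cap L|=3+\alpha_3'$ and $|D_q|=5+\alpha_4''$, so $W_0(S) = 16+12\alpha_3'-4\alpha_4''+\rho$. In the worst sub-case, where both level-$3$ Apéry elements are decomposable ($\alpha_3'=0$) and $\alpha_4''$ attains its extreme value $4$, the numeric terms cancel and one is left with only $W_0(S)\ge\rho\ge 0$. I expect the delicate point to be justifying $\alpha_4''\le 4$ cleanly: since $A_2=\emptyset$, any $z\in A_4\cap D$ factors by Lemma~\ref{downset} as $z=x+y$ with $x\in A_1$, $y\in A_3$, whence $\alpha_4''\le|A_1||A_3|=4$; as $\rho\ge 0$, this closes the case and completes the proof that $W_0(S)\ge 0$.
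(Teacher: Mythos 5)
Your proposal is correct and follows essentially the same route as the paper: derive $q=4$ from $|L|\ge 3q-2$, reduce to $2\alpha_2+\alpha_3\le 2$, bound $\alpha_4''$ via the level arithmetic ($A_4\cap D\subseteq A_1+A_3$ when $A_2=\emptyset$, and $A_4\cap D\subseteq (A_1+A_2)\cup 2A_2$ when $A_3=\emptyset$), and plug into the $W_0$ formula, with the tight case $\alpha(S)=(2,0,2)$ closing out at $W_0(S)\ge\rho\ge 0$ exactly as in the paper's $\alpha_2=0$, $\alpha_3=2$ subcase. The only cosmetic difference is that you enumerate the four profiles explicitly while the paper keeps $\alpha_3$ as a parameter.
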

\begin{proof} Since $\a_1 = 2$, we have $12 \ge |L| \ge q + 2(q-1)=3q-2$. It follows that $q < 5$, whence $q=4$ since $q \ge 4$ by hypothesis. We have
\begin{equation*}
|P \cap L| \ge 3,  \ \ |L| = 10+ 2\a_2+ \a_3, \ \ |D_4| = 3+\a_2+\a_3+\a''_4.
\end{equation*}
Hence $2\a_2+ \a_3 \le 2$ and so $\a_2 \le 1$. 

\smallskip
\sbu If $\a_2=0$, then $|L|=10+\a_3$ and so $\a_3 \le 2$. Since $2A_1 \subset S_2 \cup S_3$, and since $A_2=\emptyset$ and $2S_3 \cap S_4 = \emptyset$, it follows that
$$
A_4 \cap D \subseteq (A_1+A_3).
$$
Hence $\a_4'' \le 2 \a_3$. Therefore
\begin{eqnarray*}
W_0(S) & \ge & 3(10+\a_3)-4(3+\a_3+2\a_3)+\rho \\
& = & 18-9\a_3+\rho \\
& \ge & \rho. 
\end{eqnarray*}

\smallskip
\sbu If $\a_2=1$, then $|L|=12+\a_3$ and so $|L|=12$ and $\a_3 =0$. Since $2A_1 \subset S_2 \cup S_3$ and $A_3=\emptyset$, it follows that
$$
A_4 \cap D \subseteq (A_1+A_2) \cup (2A_2).
$$
Hence $\a''_4 \le 2+1=3$. Therefore
\begin{eqnarray*}
W_0(S) & \ge & 3\cdot 12-4(3+1+3)+\rho \\
& = & 8 + \rho. \qedhere
\end{eqnarray*}
\end{proof}

\begin{remark} A better lower bound on $W_0(S)$ may be obtained by splitting $\a_i$ as $\a_i'+\a_i''$ for $i=2, 3$ in the above proof. For instance, we have only used $|P \cap L| \ge 3$. But we could have used $|P \cap L| \ge 4$ if either $\a'_2$ or $\a'_3$ were assumed positive, while if $\a'_2=\a'_3=0$, a sharper estimate on $\a''_4$ can been derived.
\end{remark}

\subsection{When $\a_1=1$}\label{case a1=1}
Since $12 \ge |L| \ge q +(q-1)=2q-1$, it follows that $q \le 6$. We shall successively treat the cases $q=4$, $5$ and $6$. Throughout Section~\ref{case a1=1}, we set
$$
A_1=\{x\}.
$$

\subsubsection{Case $q=4$}

Then $\a(S)=(1,\a_2,\a_3)$. We have $|L|=4+3+2\a_2+\a_3$, whence $2\a_2+\a_3 \le 5$, implying $\a_2 \le 2$. We successively examine the cases $\a_2=2,1,0$. To start with, we have
\begin{equation}
A_4 \cap D \subseteq (A_1+A_2) \cup (A_1+A_3) \cup (A_2+A_2) \cup (A_2+A_3).
\end{equation}

\noindent
\textbf{\underline{Subcase $\a_2=2.$}} Then $\a(S)=(1,2,\a_3)$. We have $|L|=11+\a_3$, whence $\a_3 \le 1$. Denote $$A_2=\{y_1, y_2\}.$$
Since $A_2 \cap D \subseteq 2A_1$, and since $|2A_1|=1$, we have $\a_2'' \le 1$ whence $\a_2' \in \{1,2\}$.

\smallskip
\sbu Assume first $\a_2'=1$. Say $y_1 \in P, y_2 \in D$. Then $y_2=2x$, and $|P \cap L|=3+\a_3'$.
We have $|D_4|=4+\a_3+\a_4''$, and
$$
(A_1+A_2) \cup (2A_2) = \{4x, 3x, 2x+y_1,x+y_1, 2y_1\}.
$$
Since $4x \notin S_3$ and since $\a_3 \le 1$, there are at most four possibilities for $A_3 \cap D$, listed below together with their consequences on $A_4 \cap D$. Note that Lemma~\ref{downset} plays a key role to deduce these consequences. For example, if $4x \in A_4$ or $2x+y_1 \in A_4$, then necessarily $3x \in A_3$ or $x+y_1 \in A_3$, respectively. Note also that the level function $\lambda=\lambda_S$ is nondecreasing. Consequently, in the last case $A_3=\{2y_1\}$ below, it follows that $x+y_1 \notin A_4$, for $\lambda(x+y_1) \le \lambda(2y_1)=3$ since $x < y_1$. Here then are the possibilities for $A_3 \cap D$:

\begin{enumerate}
\item If $A_3 \cap D = \emptyset$ then $A_4 \cap D \subseteq \{3x,x+y_1, 2y_1\}$. \vspace{-0.15cm}
\item If $A_3 \cap D = \{3x\}$ then $A_4 \cap D \subseteq \{4x,x+y_1, 2y_1\}$. \vspace{-0.15cm}
\item If $A_3 \cap D = \{x+y_1\}$ then $A_4 \cap D \subseteq \{3x,2x+y_1, 2y_1\}$. \vspace{-0.15cm}
\item If $A_3 \cap D = \{2y_1\}$ then $A_4 \cap D \subseteq \{3x, 2y_1\}$. 
\end{enumerate}
In either case, we have $\a_4'' \le 3$. Recall also that $\alpha_3 \le 1$ here. Hence
\begin{eqnarray*}
W_0(S) & = & (3+\alpha_3')(11+\alpha_3)-4(4+\a_3+\a_4'')+\rho\\
& \ge & (3+\alpha_3')(11+\alpha_3)-4(7+\a_3)+\rho\\
& \ge & 4 +11\alpha_3'+\alpha_3'\alpha_3+\rho.
\end{eqnarray*}

\sbu Assume now $\a_2'=2$, so that $y_1,y_2 \in P$. Then $|P \cap L|=4+\a_3'$. We have $|D_4|=4+\a_3+\a_4''$, and
$$
(2A_1) \cup (A_1+A_2) \cup (2A_2) = \{2x, x+y_1, x+y_2, 2y_1, y_1+y_2, 2y_2\}.
$$
Up to permutation of $y_1, y_2$, and using $\a_3 \le 1$ and $2x \notin S_4$, here are the possibilities for $A_3 \cap D$, together with their consequences for $A_4 \cap D$:
\begin{enumerate}
\item If $A_3 \cap D = \emptyset$ then $A_4 \cap D \subseteq \{x+y_1, x+y_2, 2y_1, y_1+y_2, 2y_2\}$. \vspace{-0.15cm}
\item If $A_3 \cap D = \{2x\}$ then $A_4 \cap D \subseteq \{3x, x+y_1, x+y_2, 2y_1, y_1+y_2, 2y_2\}$. \vspace{-0.15cm}
\item If $A_3 \cap D = \{x+y_1\}$ then $A_4 \cap D \subseteq \{x+y_2, 2y_1, y_1+y_2, 2y_2\}$. \vspace{-0.15cm}
\item If $A_3 \cap D = \{2y_1\}$ then $A_4 \cap D \subseteq \{x+y_1, x+y_2, y_1+y_2, 2y_2\}$. \vspace{-0.15cm}
\item If $A_3 \cap D = \{y_1+y_2\}$ then $A_4 \cap D \subseteq \{x+y_1, x+y_2, 2y_1, 2y_2\}$.
\end{enumerate}
In either case, we have $\a_4'' \le 6$. Hence
\begin{eqnarray*}
W_0(S) & = & (4+\alpha_3')(11+\alpha_3)-4(4+\alpha_3+\alpha_4'')+\rho \\
 & \ge &  (4+\alpha_3')(11+\alpha_3)-4(10+\alpha_3)+\rho \\
& = & 4+11\alpha_3'+\alpha_3'\alpha_3+\rho.
\end{eqnarray*}

\medskip
\noindent
\textbf{\underline{Subcase $\a_2=1.$}} Then $\a(S)=(1,1,\a_3)$. We have $|L|=9+\a_3$, whence $\a_3 \le 3$. We also have $|P \cap L| =2+\a_2'+\a_3'$ and $|D_4|=3+\alpha_3+\alpha_4''$. Denote $$A_2=\{y\}.$$

\sbu Assume first $y \in A_2 \cap D$. Then $y = 2x$, $|P \cap L| =2+\a_3'$ and $A_3 \cap D \subseteq \{3x\}$. Thus $\a_3'' \le 1$, and either 
$$A_4 \cap D \subseteq \{3x\} \cup (x+P_3) \,\,\, \textrm{ or }\,\,\, A_4 \cap D \subseteq \{4x\} \cup (2x+P_3) \cup (x+P_3).$$
We claim that $\a_4'' \le 1 + \a_3'$ in both cases. This is clear in the first one. In the second one, for all $z \in P_3$ we have
$$
|A_4 \cap \{2x+z, x+z\}| \le 1.
$$
Therefore $|A_4 \cap ((2x+P_3) \cup (x+P_3))| \le |P_3|,$ implying $\a_4'' \le 1 + \a_3'$ here as well. Using $\a_3 =\a_3'+\a_3'' \le \a_3'+1$, we have
\begin{eqnarray*}
W_0(S) & = & (2+\a_3')(9+\a_3)-4(3+\alpha_3+\alpha_4'')+\rho\\
& \ge & (2+\a_3')(9+\a_3)-4(4+\a_3'+\a_3)+\rho\\
& = & 2+5\a_3'-2\a_3+\a_3\a_3'+\rho\\
& \ge & 2+5\a_3'-2(\a_3'+1)+\a_3\a_3'+\rho\\
& \ge & 3\a_3'+\a_3\a_3'+\rho.
\end{eqnarray*}

\smallskip
\noindent
\sbu Assume now $y \in A_2 \cap P$. Hence $|P \cap L| =3+\a_3'$. Here we have
$$
A_3 \cap D \subseteq \{2x, x+y, 2y\}
$$
and so $\a_3'' \le 3$. Let us examine in turn the possibilities for $A_3 \cap D$ and their consequences for $A_4 \cap D$:
\begin{enumerate}
\item If $A_3 \cap D = \emptyset$ then $A_4 \cap D \subseteq \{x+y,2y\}\cup (\{x,y\}+P_3)$. \vspace{-0.15cm}
\item If $A_3 \cap D = \{2x\}$ then $A_4 \cap D \subseteq \{3x,x+y,2y\}\cup (\{x,y\}+P_3)$. \vspace{-0.15cm}
\item If $A_3 \cap D = \{x+y\}$ then $A_4 \cap D \subseteq \{2y\}\cup (\{x,y\}+P_3)$. \vspace{-0.15cm}
\item If $A_3 \cap D = \{2y\}$ then $A_4 \cap D \subseteq \{x+y,3y\}\cup (\{x,y\}+P_3)$. \vspace{-0.15cm}
\item If $A_3 \cap D = \{2x, x+y\}$ then $A_4 \cap D \subseteq \{3x,2x+y,2y\}\cup (\{x,y\}+P_3)$. \vspace{-0.15cm}
\item If $A_3 \cap D = \{2x, 2y\}$ then $A_4 \cap D \subseteq \{3x,x+y,2y\}\cup (\{x,y\}+P_3)$. \vspace{-0.15cm}
\item If $A_3 \cap D = \{x+y, 2y\}$ then $A_4 \cap D \subseteq \{x+2y,3y\}\cup (\{x,y\}+P_3)$. \vspace{-0.15cm}
\item If $A_3 \cap D = \{2x, x+y, 2y\}$ then $A_4 \cap D \subseteq \{3x,2x+y,x+2y,3y\}\cup (\{x,y\}+P_3)$. \vspace{-0.15cm}
\end{enumerate}
Note that $|\{x,y\}+P_3| \le 2|P_3|=2\a_3'$. Consequently, distinguishing between the first seven cases and the last one, we have
\begin{eqnarray*}
\a_3'' \le 2 & \Rightarrow & \a_4'' \le 3+2\a_3' \\
\a_3'' = 3 & \Rightarrow & \a_4'' \le 4+2\a_3'.
\end{eqnarray*}

\noindent
-- Assume first $\a_3'' \le 2$, so that $\a_3 \le \a_3'+2$ and $\a_4'' \le 3+2\a_3'$. We have
\begin{eqnarray*}
W_0(S) & = & (3+\a_3')(9+\a_3)-4(3+\a_3+\a_4'')+\rho\\
& \ge & (3+\a_3')(9+\a_3)-4(6+\a_3+2\a_3')+\rho\\
& = & 3+\a_3'-\a_3+\a_3'\a_3+\rho\\
& \ge & 1+\a_3'\a_3+\rho.
\end{eqnarray*}

\noindent
-- Assume now $\a_3'' = 3$, so that $\a_4'' \le 4+2\a_3'$. Since $\a_3 \le 3$, it follows that $\a_3=\a_3''=3$, whence $\a_3'=0$ and $\a_4'' \le 4$. We have
\begin{eqnarray*}
W_0(S) & = & (3+\a_3')(9+\a_3)-4(3+\a_3+\a_4'')+\rho\\
& \ge & 3\cdot 12-4\cdot 10+\rho\\
& = & -4+\rho.
\end{eqnarray*}
We now show that $\rho \ge 6$ here. Indeed, since $y \in A_2$ and $3y \in A_4$, we have
$$
2m-\rho \le y, \ \ 3y \le  5m-\rho-1.
$$
Therefore $3(2m-\rho) \le 3y \le 5m-\rho-1$, implying $m \le 2\rho-1$. Now, since $m = 1+\a_1+\a_2+\a_3+\a_4 \ge 10$ by~\eqref{eq sum a_i}, it follows that $\rho \ge 6$, whence $W_0(S) \ge 2$ and we are done.

\medskip
\noindent
\textbf{\underline{Subcase $\a_2=0.$}} Then $\a(S)=(1,0,\a_3)$ here, and incidentally $\a_3 \le 5$ since $|L| \le 12$. We have
\begin{equation*}
|P \cap L| = 2+\a_3',  \ \ |L| = 7+\a_3, \ \ |D_4| = 2+\a_3+\a_4''.
\end{equation*}

Recalling that $A_1=\{x\}$, we distinguish the cases where $2x \in A$ or not.

\smallskip
\noindent
\sbu Assume first $2x \in A$. Since $\lambda(2x) \in \{2,3\}$ and since $A_2=\emptyset$, it follows that $2x \in A_3$. Hence $\a_3''=1$ and so $\a_3=1+\a_3'$. Thus
\begin{equation*}
|L| = 8+\a_3', \ \ |D_4| = 3+\a_3'+\a_4''.
\end{equation*}
Since $A_4 \cap D \subseteq \{3x\} \cup P_3$, it follows that $\a_4'' \le 1+\a_3'$ and hence $|D_4| \le 4+2\a_3'$. A straightforward computation then yields
$$
W_0(S) \ge \rho.
$$

\smallskip
\noindent
\sbu Assume now $2x \notin A$. It follows that $A_3 \subseteq P$, i.e. $\a_3=\a_3'$, and $A_4 \cap D \subseteq x+A_3$. Therefore $\a_4'' \le \a_3$n and so $|D_4| = 2 + \a_3+\a_4'' \le 2+2\a_3$. This implies here
$$
W_0(S) \ge 6+\rho.
$$

This concludes the case $\a_1=1$ and $q=4$. 

\subsubsection{Case $q=5$}
We now tackle the case $\a_1=1$ and $q=5$, i.e. $\a(S)=(1,\a_2,\a_3,\a_4)$. We have $|L|=9+3\a_2+2\a_3+\a_4$, whence $3\a_2+2\a_3+\a_4 \le 3$, implying $\a_2 \le 1$. 

\medskip
\noindent
\textbf{\underline{Subcase $\a_2=1.$}} Then $\a(S)=(1,1,\a_3,\a_4)$. Then $|L|=12$ and $\a_3 = \a_4 =0$. Moreover, $|D_5|=\a_0+\a_1+\a_2+\a_5''=3+\a_5''$. Set 
$$A_2=\{y\}.$$ 

\sbu Assume first $y \in D$. Then $y=2x$. Since $A_3=A_4=\emptyset$ by hypothesis, it follows that $|P \cap L|=2$ and $A_5 \cap D \subseteq \{3x\}$. Therefore $\a_5'' \le 1$. We conclude that
\begin{eqnarray*}
W_0(S) & = & 2\cdot 12 - 5(3+\a_5'')+\rho \\
& \ge & 4+\rho.
\end{eqnarray*}

\sbu Assume now $y \in P$. Then $|P \cap L|=3$ and $A_5 \cap D \subseteq \{2y\}$ since $\lambda(2x) \le 3$ and $\lambda(x+y) \le 4$. Hence $\a_5'' \le 1$ and
\begin{eqnarray*}
W_0(S) & = & 3\cdot 12 - 5(3+\a_5'')+\rho \\
& \ge & 16+\rho.
\end{eqnarray*}

\medskip
\noindent
\textbf{\underline{Subcase $\a_2=0.$}} Then $\a(S)=(1,0,\a_3,\a_4)$ and $|L|=9+2\a_3+\a_4$. Hence $\a_3 \le 1$. Moreover, $|D_5|=\a_0+\a_1+\a_3+\a_4+\a_5''=2+\a_3+\a_4+\a_5''$. We have $A_2 = \emptyset$.

\medskip
\textbf{\underline{Subsubcase $\a_3=1.$}} Then $\a(S)=(1,0,1,\a_4)$. We have $|L|=11+\a_4$, hence $\a_4 \le 1$. Since $A_2 = \emptyset$, we have $2x \in A$ if and only if $2x \in A_3$. 

\sbu Assume first $2x \in A$. Then $A_3=\{2x\}$ since $A_2=\emptyset$ and $\a_3=1$.
Hence $A_4 \cap D \subseteq \{3x\}$ and $A_5 \cap D \subseteq \{3x,4x\} \cup (x+A_4\cap P)$. Therefore
\begin{equation}\label{a5''}
\a_4''=\ind_4(3x), \ \ \a_5'' \le \ind_5(3x)+\ind_5(4x)+\a_4'.
\end{equation}
Note also that if $4x \in A_5$, then $A_4=\{3x\}$ by Lemma~\ref{downset} and the bound $\a_4 \le 1$. We have
\begin{eqnarray*}
|P \cap L|=2+\a_4', \ \ |L|=11+\a_4, \ \ |D_5|=3+\a_4+\a_5''=3+\a_4'+\ind_4(3x)+\a_5''.
\end{eqnarray*}
A straightforward computation, using \eqref{a5''}, then yields
\begin{eqnarray*}
W_0(S) & = & (2+\a_4')(11+\a_4) - 5(3+\a_4'+\ind_4(3x)+\a_5'')+\rho \\
& \ge & (2+\a_4')(11+\a_4) - 5(3+2\a_4'+\ind_4(3x)+\ind_5(3x)+\ind_5(4x))+\rho \\
& \ge & 7-3\cdot \ind_4(3x)-5\cdot \ind_5(3x)-5\cdot \ind_5(4x)+\rho.
\end{eqnarray*}
If $4x \notin A_5$ then $W_0(S) \ge 2+\rho$ since $\ind_4(3x)+\ind_5(3x) \le 1$ and we are done. If $4x \in A_5$, then $3x \in A_4$ as noted above, whence $W_0(S) \ge -1+\rho$. But then $4x \in C$, since $\lambda(4x)=5$ whereas $\lambda(2x)=3$. Thus $\rho \ge |C| \ge 1$, implying $W_0(S) \ge 0$, as desired.

\sbu Assume now $2x \notin A$. Then $\a_3'=\a_3=1$ and $|P \cap L|=3+\a_4'$. We have $A_5 \cap D \subseteq (A_1+A_3) \cup (A_1+A_4)$, whence $\a_5'' \le 2$ since $\a_1=\a_3=1$ and $\a_4 \le 1$. Therefore $|D_5|=3+\a_4+\a_5'' \le 6$, so that
\begin{eqnarray*}
W_0(S) & = & (3+\a_4')(11+\a_4) - 5|D_5|+\rho \\
& \ge & 3\cdot 11 -30+\rho\\
& \ge & 3+\rho.
\end{eqnarray*}

\medskip

\textbf{\underline{Subsubcase $\a_3=0.$}} Then $\a(S)=(1,0,0,\a_4)$. We have $|L|=9+\a_4$, hence $\a_4 \le 3$ and $|D_5|=2+\a_4+\a_5''$. Since $A_2=A_3=\emptyset$, and since $2 \le \lambda(2x) \le 3$, it follows that $2x \notin A$ and $A_4=P_4$. Thus $A_5 \cap D \subseteq x+P_4$, so that $\a_5'' \le \a_4$ and $|P \cap L|=2+\a_4$. It follows that
\begin{eqnarray*}
W_0(S) & \ge & (2+\a_4')(9+\a_4) - 5(2+\a_4+\a_5'')+\rho \\
& \ge & (2+\a_4')(9+\a_4) - 5(2+2\a_4)+\rho \\
& \ge & 8+\a_4+\a_4^2+\rho.
\end{eqnarray*}

\smallskip
This concludes the case $\a_1=1$ and $q=5$.

\subsubsection{Case $q=6$}
Still for $\a_1=1$, we now tackle the last case $q=6$. Then $\a(S)=(1,\a_2,\a_3,\a_4, \a_5)$. We have $|L|=11+4\a_2+3\a_3+2\a_4+\a_5$, whence $\a_2=\a_3=\a_4=0$ and $\a_5 \le 1$. 

\medskip
\noindent
\textbf{\underline{Subcase $\a_5=1.$}} Then $\a(S)=(1,0,0,0,1)$. We have $|L|=12$ and $|D_6|= \a_0+\a_1+\a_5+\a_6''=3+\a_6''$. Recall that $A_1=\{x\}$. Since $\lambda(2x) \le 3$ and $A_2=A_3=A_4=\emptyset$, it follows that $2x \notin A$. Whence $A_5=P_5$, i.e. $\a_5'=\a_5=1$, and $A_6 \cap D \subseteq x+A_5$. Hence $lP \cap Ll=4$ and $\a_6'' \le 1$. Consequently $|D_6| \le 4$, and
\begin{eqnarray*}
W_0(S) & \ge & 3\cdot 12- 6 \cdot 4+\rho \\
& = & 12+\rho.
\end{eqnarray*}

\medskip
\noindent
\textbf{\underline{Subcase $\a_5=0.$}} Then $\a(S)=(1,0,0,0,0)$. We have $|L|=11$ and $|D_6|=2+\a_6''$. Since $A_1=\{x\}$ and $A_2,A_3,A_4,A_5$ are all empty, it follows that $|P \cap L|=2$ and $A_6 \cap D=\emptyset$, i.e. $\a_6''=0$. Thus
$$
W_0(S) = 2 \cdot 11 -6 \cdot 2+\rho = 10+\rho.
$$

\smallskip

Summarizing, we have shown that if $|L| \le 12$, $\a_1=1$ and $q=6$, then $W_0(S) \ge 10+\rho$. This concludes the case $|L| \le 12$ and $\a_1=1$.

\subsection{When $\a_1=0$}
As noted at the beginning of Section~\ref{main}, it suffices to consider the cases $4 \le q \le 7$.
\subsubsection{Case $q=4$} Then $\a(S)=(0,\a_2,\a_3)$. Since $\lceil q/2\rceil = 2$ here, Theorem~\ref{thm:h} yields $W_0(S) \ge 0$.

\subsubsection{Case $q=5$} Then $\a(S)=(0,\a_2,\a_3,\a_4)$. We have $12 \ge |L|=5+3\a_2+2\a_3+\a_4$. Hence $\a_2 \le 2$. We now examine successively the cases $\a_2=2,1,0$. Since $A_1=\emptyset$, it follows that $A_2 \subset P$.

\medskip
\noindent
\textbf{\underline{Subcase $\a_2=2.$}} Then $\a(S)=(0,2,\a_3,\a_4)$. We have $|L|=11+2\a_3+\a_4$. Hence $\a_3=0$ and $\a_4 \le 1$. Since $A_2 \subset P$, we have $\a_2'=\a_2=2$ here. Thus $|P \cap L| \ge 3$. Set
$$
A_2=\{x_1,x_2\}.
$$

\medskip
\underline{Assume first $\a_4=1$.} Then $\a(S)=(0,2,0,1)$, so that $|L|=12$ and $|D_5|=4+\a_5''$.
Denote $A_4=\{z\}$. 

\smallskip
\sbu If $z \in P$, then $|P \cap L|=4$ and $2A_2 \cap (A_3 \cup A_4)=\emptyset$.
Now
$$
A_5 \cap D \subseteq 2 A_2 \cup (A_2+A_4).
$$
Since $|2A_2| \le 3$ and $|A_2+A_4| \le |A_2|$, it follows that $\a_5'' \le 5$. Hence
\begin{eqnarray*}
W_0(S) & = & |P \cap L||L|-5|D_5|+\rho \\
& \ge & 4 \cdot 12 -5 \cdot 9+\rho \\
& = & 3+\rho
\end{eqnarray*}
and we are done if $z \in P$.

\smallskip
\sbu If $z \in D$, then $|P \cap L|=3$ and $z \in A_2$ since $A_3=\emptyset$. Hence, up to renumbering, either $z=2x_1$ or $z=x_1+x_2$. If $z=2x_1$ then
$A_5 \cap D \subseteq \{3x_1,x_1+x_2,2x_2\}$, whereas if $z=x_1+x_2$, then $A_5 \cap D \subseteq \{2x_1,2x_2\}$. In either case, we have $\a_5'' \le 3$. Therefore $|D_5|\le 7$. It follows that
\begin{eqnarray*}
W_0(S) & = & |P \cap L||L|-5|D_5|+\rho \\
& \ge & 3 \cdot 12 -5 \cdot 7+\rho \\
& = & 1+\rho
\end{eqnarray*}
and we are done as well if $z \in D$.

\medskip
\underline{Assume now $\a_4=0$.} Then $\a(S)=(0,2,0,0)$, so that $|L|=11$ and $|D_5|=3+\a_5''$. In that case, we have $A_5 \subseteq 2A_2$, whence $\a_5'' \le 3$ and $|D_5| \le 6$. It follows that
$$
W_0(S) \ge 3 \cdot 11 - 5 \cdot 6 +\rho \ge 3+\rho.
$$
That concludes the subcase $\a_2=2$, i.e. $\a(S)=(0,2,\a_3,\a_4)$ here.
 
\medskip
\noindent
\textbf{\underline{Subcase $\a_2=1.$}} Then $\a(S)=(0,1,\a_3,\a_4)$. We have $|L|=8+2\a_3+\a_4$. Hence $\a_3 \le 2$. Since $A_2 \subset P$, we have $\a_2'=\a_2=1$ here. Set
$$
A_2=\{x\}.
$$

\smallskip

\textbf{\underline{Subsubcase $\a_3=2.$}} Then $\a(S)=(0,1,2,\a_4)$. We have $|L|=12+\a_4$, whence $|L|=12$ and $\a_4=0$. Thus $\a(S)=(0,1,2,0)$. Hence $|D_5|=4+\a_5''$. Since $A_3 \cap D \subseteq 2A_2$, it follows that $\a_3'' \le 1$ and hence $\a_3' \in \{1,2\}$. Therefore $|P \cap L| \ge 3$. Set
$$
A_3=\{y_1,y_2\}.
$$

\sbu Assume first $\a_3'=1$. Hence $\a_3''=1$, and up to renumbering, we may assume $y_1=2x$, $y_2 \in P$. Hence $|P \cap L|=3$ and
$$
A_5 \cap D \subseteq \{3x, x+y_2, 2y_2\}.
$$
Therefore $\a_5'' \le 3$ and so $|D_5| \le 7$. We then have
\begin{eqnarray*}
W_0(S) & = & |P \cap L||L|-5|D_5|+\rho \\
& \ge & 3 \cdot 12 -5 \cdot 7+\rho \\
& = & 1+\rho
\end{eqnarray*}
and we are done here.

\smallskip
\sbu Assume now $\a_3'=\a_3=2$. Thus $y_1,y_2 \in P$ and so $|P \cap L|=4$. We have
$$
A_5 \cap D \subseteq \{2x\} \cup (x+A_3) \cup (2A_3 \cap A_5).
$$ 
Thus $\a_5'' \le 3 + |2A_3 \cap A_5|$, and of course $|2A_3 \cap A_5| \le |2A_3| \le 3$ since $|A_3|=2$. At this point we have
$$
|D_5| \le 7 + |2A_3 \cap A_5|.
$$
Therefore
\begin{eqnarray*}
W_0(S) & = & |P \cap L||L|-5|D_5|+\rho \\
& \ge & 4 \cdot 12 -5 \cdot (7+|2A_3 \cap A_5|)+\rho \\
& = & 13-5|2A_3 \cap A_5|+\rho.
\end{eqnarray*}
Now $\rho \ge |2A_3 \cap A_5|$, since $2A_3 \cap A_5 \subseteq C$ and $|C| \le \rho$. Hence
$$
W_0(S) \ge 13-4|2A_3 \cap A_5|.
$$
But $|2A_3 \cap A_5| \le 3$ as seen above. It follows that $W_0(S) \ge 1$ and we are done here.

\medskip

\textbf{\underline{Subsubcase $\a_3=1.$}} Then $\a(S)=(0,1,1,\a_4)$. We have
\begin{equation}\label{general}
|P \cap L| = 2+\a_3'+\a_4',  \ \ |L| = 10+\a_4, \ \ |D_5| = 3+\a_4+\a_5''.
\end{equation}
We examine four subcases, depending on which multiples of $x$ belong to $A$.

\smallskip
\textbf{Case 1. $2x \in A_3, 3x \in A_4$.} Then $A_3=\{2x\}$ since $\a_3=1$, and $A_4 \cap D = \{3x\}$. Hence $\a_4=\a_4'+1$. Moreover, 
$$A_5 \cap D \subseteq \{4x\} \cup (x+A_4 \cap P).$$
Whether $4x$ belongs to $A_5$ or not will be measured by $\ind_5(4x) \in \{0,1\}$, where $\ind_i=\ind_{A_i}$ denotes the indicator function of $A_i$ for all $i$. Thus, 
$$
\a_5'' \le \ind_5(4x)+\a_4',
$$
and so
\begin{equation}\label{data}
|P \cap L| = 2+\a_4',  \ \ |L| = 11+\a_4', \ \ |D_5| = 4+\a_4' +\a_5'' \le 4+2\a_4'+\ind_5(4x).
\end{equation}
Plugging this data into \eqref{W0}, we get
$$
W_0(S) \ge 2+3\a_4'+\a_4'^2-5\cdot \ind_5(4x)+\rho.
$$
If either $\a_4' \not=0$ or $\ind_5(4x)=0$, then $W_0(S) \ge \rho$ and we are done. If $\a_4'=0$ and $\ind_5(4x)=1$, then $W_0(S) \ge -3+\rho$. But in this case, we claim that $\rho \ge 3$. Indeed, both $2x,3x$ are \emph{compressed} Apéry elements, since $\lambda(x) = 2, \lambda(2x) = 3, \lambda(3x) =4$ and $2x=x+x, 3x=x+2x$. Moreover, since $\ind_5(4x)=1$ here, then $4x$ is also a compressed Apéry element, using $\lambda(4x) =5$ and $4x=2x+2x$. Hence $\{2x,3x,4x\} \subseteq C$. Since $\rho \ge |C|$, this proves $\rho \ge 3$ as claimed and yields $W_0(S) \ge 0$, as desired.

\smallskip
\textbf{Case 2. $2x \in A_3, 3x \notin A_4$.} Then $A_3=\{2x\}$ and $A_4 \subset P$, so that $\a_4=\a_4'$. We have
$$
A_5 \cap D \subseteq \{3x\} \cup (x+A_4).
$$
Hence $\a_5'' \le 1+\a_4$, and so 
\begin{equation*}
|P \cap L| = 2+\a_4,  \ \ |L| = 10+\a_4, \ \ |D_5| = 3+\a_4 +\a_5'' \le 4+2\a_4.
\end{equation*}
Hence $W_0(S) \ge 2\a_4+\a_4^2+\rho$ and we are done in the present case.

\smallskip
\textbf{Case 3. $2x \in A_4$.} Then $A_3=A_3 \cap P$, so that $\a_3'=\a_3=1$. We set $A_3=\{y\}$. We have $\{2x\} \subseteq A_4 \cap D \subseteq \{2x,x+y\}$ and $A_5 \cap D \subseteq \{3x,2y,x+y\}$. Since $x+y$ cannot belong to both $A_4, A_5$, it follows that $\a_4''+\a_5'' \le 4$.

Assume first $\a_4=2$. We then have
\begin{equation}
|P \cap L| = 3+\a_4',  \ \ |L| = 12, \ \ |D_5| = 3+\a_4'+\a_4'' +\a_5'' \le 7+\a_4'.
\end{equation}
It follows that $W_0(S) \ge 12(3+\a_4') - 5(7+\a_4')+\rho \ge 1+\rho$ and we are done.

Assume now $\a_4=1$. Then $A_4=\{2x\}$ and $|L|=11$. If $\a_5'' \le 2$, then
\begin{equation*}
|P \cap L| = 3,  \ \ |L| = 11, \ \ |D_5| = 4 +\a_5'' \le 6.
\end{equation*}
It follows that $W_0(S) \ge 3 \cdot 11 - 5 \cdot 6 +\rho \ge 3+\rho$ and we are done. But if $\a_5''=3$, i.e. if $A_5 \cap D = \{3x,2y,x+y\}$, then we only get $W_0(S) \ge -2+\rho$. But in this case, since $3x,2y \in C$, it follows that $\rho \ge 2$ whence $W_0(S) \ge 0$ as desired.

\smallskip
\textbf{Case 4. $2x \notin A$.} Then as above, $A_3=A_3 \cap P$, so that $\a_3'=\a_3=1$, and we set $A_3=\{y\}$. If $x+y \in A$, then it belongs to either $A_4$ or $A_5$. But in any case, $x+2y \notin A$ since $\lambda(x+2y) \ge 6$, as easily seen. It follows that $A_4 \cap D \subseteq \{x+y\}$ and $A_5 \cap D \subseteq \{x+y,2y\}$. Hence
$$
\a_4=\a_4'+\ind_4(x+y), \ \ \a_5'' \le 1+\ind_5(x+y).
$$
It follows that
\begin{equation*}
|P \cap L| = 3+\a_4',  \ \ |L| = 10+\a_4'+\ind_4(x+y), \ \ |D_5| \le 4+\a_4'+\ind_4(x+y)+\ind_5(x+y).
\end{equation*}
A straightforward computation, using $\ind_4(x+y)+\ind_5(x+y) \le 1$, then yields
\begin{eqnarray*}
W_0(S) & \ge & 10 +8\a_4'+\a_4'^2+(\a_4'-2)\cdot \ind_4(x+y)-5\cdot \ind_5(x+y)+\rho \\
& \ge & 5+\rho
\end{eqnarray*}
and we are done.

\medskip

\textbf{\underline{Subsubcase $\a_3=0.$}} Then $\a(S)=(0,1,0,\a_4)$. We have
\begin{equation*}
|P \cap L| = 2+\a_4',  \ \ |L| = 8+\a_4, \ \ |D_5| = 2+\a_4+\a_5''.
\end{equation*}
Here $A_3=\emptyset$, and $\a_4 \le 4$ since $|L| \le 12$ by assumption.

\smallskip
\textbf{Case 1. $2x \in A$.} Then $2x \in A_4 \sqcup A_5$. Thus 
$$A_4\cap D \subseteq \{2x\}, \ \ A_5\cap D \subseteq \{2x,3x\} \cup (x+A_4\cap P).$$
Hence $\a_4=\a_4'+\a_4''=\a_4'+\ind_4(2x)$ and $\a_5'' \le \ind_5(2x)+\ind_5(3x)+\a_4'$. We have
\begin{equation*}
|P \cap L| = 2+\a_4',  \ \ |L| = 8+\a_4'+\ind_4(2x), \ \ |D_5| \le 2+2\a_4'+\ind_4(2x)+\ind_5(2x)+\ind_5(3x).
\end{equation*}
A straightforward computation then yields
$$
W_0(S) \ge 6-3\cdot \ind_4(2x)-5\cdot \ind_5(2x)-5\cdot \ind_5(3x)+\rho.
$$
If $3x \notin A$, then $\ind_5(3x)=0$ and $W_0(S) \ge 1+\rho$. The case $3x \in A$ is more delicate. It implies $3x \in A_5$, i.e. $\ind_5(3x)=1$, and then of course $2x \in A_4$, i.e. $\ind_4(2x)=1$ and $\ind_5(2x)=0$. Thus $W_0(S) \ge -2+\rho$. It remains to show $\rho \ge 2$. Since $(\lambda(x), \lambda(2x), \lambda(3x)) = (2,4,5)$ here, it follows that $3x \in C$, whence $\rho \ge 1$. This is not strong enough yet. However, since $(\lambda(2x), \lambda(3x)) = (4,5)$, we have
$$
4m-\rho \le 2x, \ \ 3x \le  6m-\rho-1.
$$
Therefore $3(4m-\rho)/2 \le 3x \le 6m-\rho-1$, implying $\rho \ge 2$ as desired, and hence $W_0(S) \ge 0$.

\smallskip
\textbf{Case 2. $2x \notin A$.} Then $A_3=A_4 \cap D=\emptyset$. Hence $A_4=A_4 \cap P$ and $A_5 \cap D \subseteq x+A_4$. That is, we have $\a_4=\a_4'$ and $\a_5'' \le \a_4$. Thus $|D_5| \le 2+2\a_4$, and a straightforward computation yields
$$
W_0(S) \ge 6 + \rho.
$$

\subsubsection{Case $q=6$} Then $\a(S)=(0,\a_2,\a_3,\a_4,\a_5)$. We have $
12 \ge |L| = 6+4\a_2+3\a_3+2\a_4+\a_5$, whence $\a_2 \le 1$.

\medskip
\noindent
\textbf{\underline{Subcase $\a_2=1.$}} Then $|L|=10+3\a_3+2\a_4+\a_5$, implying $\a_3=0$ and $\a_4 \le 1$. Thus $\a(S)=(0,1,0,\a_4,\a_5)$. We set
$$
A_2 = \{x\},
$$
with $x \in P$ since $A_2 \cap D = \emptyset$. We have $A_3=\emptyset$.

\medskip

\textbf{\underline{Subsubcase $\a_4=1.$}} Then $|L|=12$, whence $\a_5=0$ and so $\a(S)=(0,1,0,1,0)$. Set $A_4=\{z\}$. Since $A_3=A_5=\emptyset$, it follows that $2x \in A$ if and only if $2x=z$. That is, either $z \in A\cap P$ and $2x \notin A$, or else $z=2x$. In either case, we have
$$
A_6 \cap D \subseteq x+A_4,
$$
whence $\a_6'' \le 1$. Summarizing, we have
\begin{equation*}
|P \cap L| = 2+\a_4',  \ \ |L| = 12, \ \ |D_6| = 3+\a_6'' \le 4.
\end{equation*}
It follows that
$$
W_0(S) \ge (2+\a_4')\cdot 12 - 6\cdot 4 + \rho \ge \rho 
$$
and we are done here.

\medskip

\textbf{\underline{Subsubcase $\a_4=0.$}} Then $\a(S)=(0,1,0,0,\a_5)$. Since $\lambda(2x) \in [3,5]$ and $A_3=A_4=\emptyset$, we have $2x \in A$ if and only if $2x \in A_5$. Therefore $A_5 \cap D \subseteq \{2x\}$, i.e. $\a_5'' \le 1$. However $3x \notin A_6$. For if $3x \in A_6$, then $2x \in A_5$ by Lemma~\ref{downset}, whence
$$
5m-\rho \le 2x, \ \ 3x \le 7m-\rho-1.
$$
Thus $3(5m-\rho)/2 \le 3x \le 7m-\rho-1$, whence $\rho \ge m+2$, in contradiction with $\rho \in [0,m[$. Therefore $A_6 \cap D \subseteq x+(A_5 \cap P)$, i.e. $\a_6'' \le \a_5'$. Summarizing, we have
\begin{equation*}
|P \cap L| = 2+\a_5',  \ \ |L| = 10+\a_5, \ \ |D_6| = 2+\a_5+\a_6'' \le 2+\a_5+\a_5'.
\end{equation*}
A straightforward computation then yields
$$
W_0(S) \ge 8 - 4\a_5''+\rho,
$$
whence $W_0(S) \ge 4+\rho$ since $\a_5'' \le 1$.

\medskip
\noindent
\textbf{\underline{Subcase $\a_2=0.$}} Then $\a(S)=(0,0,\a_3, \a_4, \a_5)$ and Theorem~\ref{thm:h} yields $W_0(S) \ge 0$.

\medskip

\subsubsection{Case $q=7$} Here $\a(S)=(0,\a_2,\a_3,\a_4,\a_5,\a_6)$. Then $12 \ge |L| \ge 7+5\a_2$, whence $\a_2 \le 1$. 

\smallskip
\sbu If $\a_2=1$, then $|L|=12$ and $\a_3=\dots=\a_6=0$. It follows that $A \cap D=\emptyset$, whence $\a_7''=0$. Thus $|P \cap L|=2$ and $|D_7| = 2$. Therefore
$$
W_0(S) = 2 \cdot 12 - 7\cdot 2+\rho=10+\rho
$$
and we are done in this case.

\smallskip
\sbu If $\a_2=0$, then $|L|=7+4\a_3+3\a_4+2\a_5+\a_6$, whence $\a_3 \le 1$. 

\smallskip
\sbbu Assume $\a_3=1$. Then $\a(S)=(0,0,1,\a_4,\a_5,\a_6)$ and $|L|=11+2\a_5+\a_6$, whence $\a_4=\a_5=0$ and $\a_6 \le 1$. Thus in fact, $\a(S)=(0,0,1,0,0,\a_6)$ and $|L|=11+\a_6$. We set
$$
A_3=\{x\}.
$$
Of course $x \in P$ since $A_1=A_2=\emptyset$. Since $A_4=A_5=\emptyset$, it follows that $A_6 \cap D \subseteq \{2x\}$. Moreover, since $(S_3+S_6)\cap S_7=\emptyset$, we also have $A_7 \cap D \subseteq \{2x\}$. Thus
$$
\a_6''=\ind_6(2x), \ \ \a_7''=\ind_7(2x).
$$
Summarizing, we have
\begin{equation*}
|P \cap L| = 2+\a_6',  \ \ |L| = 11+\a_6, \ \ |D_7| = 2+\a_6+\a_7'' = 2+\a_6'+\ind_6(2x)+\ind_7(2x).
\end{equation*}
A straightforward computation, using $\ind_6(2x)+\ind_7(2x) \le 1$, then yields
$$
W_0(S) \ge 1+\rho
$$
and we are done.

\smallskip
\sbbu Finally, assume $\a_3=0$. Then $\a(S)=(0,0,0,\a_4,\a_5,\a_6)$. In that case, since $\a_1=\dots=\a_{h-1}=0$ where $h=\lceil q/2 \rceil = 4$, Theorem~\ref{thm:h} implies $W_0(S) \ge 0$.

\medskip

\subsubsection{Case $q \ge 8$}

For depth $q \ge 8$, Proposition~\ref{prop q ge 8} yields $W_0(S) \ge 0$. 

\subsection{Concluding remarks} Having examined above all possible Apéry profiles $\a(S)$ compatible with $|L| \le 12$, the proof of Theorem~\ref{main} is now complete. 

\smallskip
As mentioned earlier, among the more than $10^{13}$ numerical semigroups of genus $g \le 60$, exactly five of them satisfy $W_0(S) < 0$. These five exceptions have depth $q=4$ and Apéry profile $\a(S)=(2,0,3)$, yielding $|L|=4+2\cdot 3+3=13$. Moreover, they satisfy $c=4m$, $|P \cap L|=|P_1|=3$ and $\a_4''=4$, whence $|D_4|=10$ and $W_0(S) = 3\cdot 13 - 4\cdot 10=-1$.

Infinite families of numerical semigroups satisfying $W_0(S)<0$ have been constructed in \cite{D,Jean}. However, they all turn out to satisfy $W(S) \ge 0$. It would be very interesting to characterize those $S$ satisfying $W_0(S)<0$, but that will likely be hard to achieve. Less ambitiously, can one determine how many such cases occur in, say, genus $g \le 100$?

\medskip
Authors' addresses:
\begin{itemize}
\item Shalom Eliahou, Univ. Littoral C\^ote d'Opale, UR 2597 - LMPA - Laboratoire de Math\'ematiques Pures et Appliqu\'ees Joseph Liouville, F-62228 Calais, France and CNRS, FR2037, France. \\
\texttt{eliahou@univ-littoral.fr}
\item D. Mar\'in-Arag\'on, Univ. Cádiz, Facultad de Ciencias Campus Universitario Río San Pedro s/n. 11510 Puerto Real, Cádiz, España \\
\texttt{daniel.marin@uca.es}
\end{itemize}

\end{document}